\documentclass{amsart}
\usepackage{amsmath,amsthm,amssymb,amscd,setspace}

\usepackage{graphicx}
\usepackage{amsthm}

\newtheorem{thm}{Theorem}[section]
\newtheorem{prop}[thm]{Proposition}
\newtheorem{lem}[thm]{Lemma}
\newtheorem{cor}[thm]{Corollary}

\theoremstyle{definition}

\theoremstyle{remark}
\newtheorem{eg}[thm]{Example}
\theoremstyle{remark}
\newtheorem{rem}[thm]{Remark}

\newcommand{\Z}{\mathbb{Z}}

\newcommand{\Hom}{{\rm Hom}}

\renewcommand{\L}{\mathcal{L}}

\begin{document}

\title{Loop space construction of bigraphs and box complexes}
\author{Takahiro Matsushita}
\email{mtst@math.kyoto-u.ac.jp}

\maketitle

\begin{abstract}
Dochtermann \cite{Dochtermann 2} introduced the loop space construction of a based graph $(G,v)$ whose basepoint is a looped vertex. He showed that the complex $C(\Omega(G,v))$ is homotopy equivalent to the loop space $\Omega(C(G),v)$ of $C(G)$. Here we write $C(G)$ to mean the clique complex of the maximal reflexive subgraph of $G$. In this paper, we consider its bigraph version. A bigraph is a graph equipped with its 2-coloring. We introduce the loop space construction $\Omega_{/K_2}(X,x)$ of a based bigraph $(X,x)$. This is a graph such that $C(\Omega_{/K_2}(X,x))$ is homotopy equivalent to the loop space of the box complex $B_{/K_2}(X)$ of the bigraph. As a result, we have alternative proofs of some results of Matsushita \cite{Matsushita 1} and Schultz \cite{Schultz}.
\end{abstract}

\section{Introduction}

An $n$-coloring of a simple graph $G$ is a map from the vertex set of $G$ to the $n$-point set $\{ 1,\cdots, n\}$ such that adjacent vertices have different values. The chromatic number $\chi(G)$ of $G$ is the smallest number $n$ such that $G$ has an $n$-coloring. The graph coloring problem is to compute the chromatic number of graphs.

Lov\'asz \cite{Lovasz} introduced the neighborhood complex $N(G)$ of a graph $G$, and showed that some homotopy invariant of $N(G)$ is a lower bound for the chromatic number of $G$. The box complex is a $\Z_2$-poset $B(G)$ associated with a graph $G$ (see Section 2), whose classifying space is homotopy equivalent to $N(G)$.

Let $(G,v)$ be a based graph whose basepoint $v$ is a looped vertex. Dochtermann considered a group associated to $(G,v)$, which is similar to the fundamental group of spaces, in a combinatorial way. He introduced the loop space construction of $(G,v)$, and used it to prove the isomorphism between his group and the fundamental group of the clique complex $C(G)$ of the maximal reflexive subgraph of $G$ (see Section 2).

On the other hand, the author \cite{Matsushita 1} considered the 2-fundamental group $\pi_1^2(G,v)$ of a based graph $(G,v)$. This is also a group defined in a combinatorial way, and similar to the fundamental groups of spaces. The 2-fundamental group has a natural subgroup called the even part $\pi_1^2(G,v)_{ev}$, and he showed that the even part and the fundamental group of the neighborhood complex is isomorphic. However, this isomorphism is proved by the comparison with the representations of both of the groups.

It is known that $N(G)$ and $C(G^{K_2})$ are homotopy equivalent (see Section 2). So it is natural to ask that we can show the isomorphism $\pi_1^2(G,v)_{ev} \cong \pi_1(N(G),v)$ in a way similar to Dochtermann \cite{Dochtermann 2}. This is a motivation of this research and in fact we can do it by considering the loop space construction of bigraphs.

A bigraph is a graph $X$ equipped with a 2-coloring $\varepsilon_X$ of $X$ (see \cite{BGZ}). This notion is essential in the research of the box complexes. For a graph $G$, we regard the Kronecker double covering \cite{IP} $K_2 \times G$ as a bigraph by the first projection. The author \cite{Matsushita 3} defined the box complex $B_{/K_2}(X)$ of a bigraph $X$, and showed
$$B_{/K_2}(K_2 \times G) \cong B(G).$$
Moreover, he showed that bigraphs $X$ and $Y$ are isomorphic up to isolated vertices if and only if their box complexes are isomorphic.

A basepoint of a bigraph $X$ is a graph homomorphism $x: K_2 \rightarrow X$ commutative with their 2-colorings. Then $x$ is identified with a point of $B_{/K_2}(X)$. For a based bigraph $(X,x)$, we construct a based graph $\Omega_{/K_2}(X,x)$ and showed the following theorem:

\begin{thm}[Example \ref{eg 4.4}] \label{thm Matsushita}
For a based bigraph $(X,x)$, the clique complex $C(\Omega_{/K_2}(X,x))$ is homotopy equivalent to $\Omega(B_{/K_2}(X), x)$. Moreover, we have
$$\pi_1 (B_{/K_2}(X),x) \cong \pi_1^2(X,x(0))$$
\end{thm}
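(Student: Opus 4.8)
The plan is to deduce both assertions from the general loop space construction of bigraphs developed in Section~4, specialized to the box complex functor. For the first assertion I would first unwind the definition of $\Omega_{/K_2}(X,x)$ and check that it is precisely the graph produced by the general construction when it is applied to the based bigraph $(X,x)$ through $B_{/K_2}$; granting that, $C(\Omega_{/K_2}(X,x))\simeq\Omega(B_{/K_2}(X),x)$ is a direct instance of the general theorem. An alternative, and in some ways more transparent, route is to go through Dochtermann's theorem for graphs: from the excerpt one knows $B_{/K_2}(K_2\times G)\cong B(G)$ and $N(G)\simeq C(G^{K_2})$, so one expects a reflexive graph $H$ (the bigraph analogue of $G^{K_2}$) together with a looped vertex $h$ lying over $x$ such that $C(H)\simeq B_{/K_2}(X)$ basepoint-preservingly and $\Omega_{/K_2}(X,x)$ agrees with $\Omega(H,h)$; then Dochtermann's theorem $C(\Omega(H,h))\simeq\Omega(C(H),h)$ finishes it. I would carry out whichever of these identifications requires the least bookkeeping.

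The statement about fundamental groups I would then obtain by applying $\pi_0$ to the homotopy equivalence just established. On one side, $\pi_0\,\Omega(B_{/K_2}(X),x)=\pi_1(B_{/K_2}(X),x)$, after first checking that $B_{/K_2}(X)$ is path connected at $x$ (or simply passing to the component of $x$). On the other side, for any graph $H$ the set $\pi_0(C(H))$ is the set of connected components of the maximal reflexive subgraph of $H$, i.e.\ the set of vertices of $H$ modulo the equivalence relation generated by ``$u$ and $w$ are looped and adjacent''. It therefore remains to match the vertices of $\Omega_{/K_2}(X,x)$ modulo reflexive adjacency with $\pi_1^2(X,x(0))$: reading off the construction, the vertices of $\Omega_{/K_2}(X,x)$ should be certain walks in $X$ from $x$ compatible with the $2$-colorings, the reflexive-adjacency relation should be exactly the elementary $2$-homotopies of such walks, and concatenation of walks should correspond to composition of loops in $B_{/K_2}(X)$; comparing these with the combinatorial definition of the $2$-fundamental group in \cite{Matsushita 1} yields the group isomorphism. (Specializing to $X=K_2\times G$ would then recover, via $B_{/K_2}(K_2\times G)\cong B(G)$, the statement about $\pi_1^2(G,v)_{ev}$ mentioned in the introduction.)

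The main obstacle is the homotopy equivalence itself, that is, proving that the loop space construction genuinely computes the based loop space. Done directly, the essential input is a combinatorial path--loop fibration for box complexes of bigraphs whose base, total space, and fibre are again realized by the relevant complexes, together with a nerve- or approximation-type argument identifying the fibre with $C(\Omega_{/K_2}(X,x))$ up to weak equivalence; threading the $\Z_2$-structure and the two-vertex basepoint $x\colon K_2\to X$ through all of this is where the care lies. If instead one routes through Dochtermann's theorem, the difficulty shifts to constructing the auxiliary reflexive graph $H$ and verifying both $C(H)\simeq B_{/K_2}(X)$ and $\Omega_{/K_2}(X,x)\cong\Omega(H,h)$, the latter being a purely combinatorial but delicate comparison of two graph-level constructions. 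A secondary point, easy to overlook, is that the second assertion concerns groups, not merely sets, so one must confirm that concatenation of $X$-walks is carried to composition of loops under the whole chain of identifications above.
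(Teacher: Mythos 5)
Your proposal takes essentially the same route as the paper: the homotopy equivalence is the specialization of Theorem \ref{thm 4.3} (which rests on Proposition \ref{prop 4.2} and the Quillen-type lemma, i.e.\ your ``combinatorial path--loop fibration'') to $Y=K_2$ and $f=g=x$, and the group isomorphism is obtained by passing to $\pi_0$ of the loop graph and matching its components with even-length combinatorial loops via an explicit stabilization map, exactly as in Example \ref{eg 4.4}. The point you flag about checking that concatenation of walks corresponds to composition of loops is one the paper itself treats only tersely, so raising it is appropriate rather than a defect of your plan.
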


Using $B_{/K_2}(K_2 \times G) \cong B(G) \simeq N(G)$ and $\pi_1^2(K_2 \times G, (0,v)) \cong \pi_1^2(G,v)_{ev}$, we have a desired isomorphism $\pi_1^2(G,v)_{ev} \cong \pi_1(N(G),v)$.

By similar constructions to $\Omega_{/K_2}(X,x)$, we have an alternative proof of the following theorem by Schultz. Let $X$ be a $\Z_2$-space. We write $\L X$ to mean the free loop space of $X$, and $\L' X$ to mean the space of $\Z_2$-maps from $S^1$ to $X$. Here we consider $S^1$ as a $\Z_2$-space by the antipodal map.

\begin{thm}[Schultz \cite{Schultz}] \label{thm Schultz}
For a graph $G$, there are homotopy equivaleces
$$\lim_{\longrightarrow} \Hom(C_{2r},G) \simeq_{\Z_2} \L B(G), \lim_{\longrightarrow} \Hom(C_{2r+1}, G) \simeq_{\Z_2} \L' B(G).$$
\end{thm}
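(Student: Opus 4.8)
The plan is to mimic the loop space construction $\Omega_{/K_2}(X,x)$ in the two relevant "free loop" settings, replacing the based circle $K_2$ by the unbased odd/even cycles $C_{2r+1}$ and $C_{2r}$, and then to bootstrap from the based statement of Theorem \ref{thm Matsushita} to the free statements using a fibration-type argument. First I would recall, following Dochtermann \cite{Dochtermann 2}, that the complex $C(\Omega(G,v))$ models $\Omega(C(G),v)$ via a zig-zag of homotopy equivalences built from the combinatorial subdivision/approximation machinery; the key formal input is that $\mathrm{Hom}(C_n, G)$ stabilizes under the maps induced by the folding $C_{n+2}\to C_n$ (these are the maps defining the colimits in the statement), and that the colimit realizes maps of the "subdivided circle." So the first step is to set up, for the bigraph $K_2\times G$ with its canonical bigraph structure, a free analogue $\Omega^{\mathrm{free}}_{/K_2}(K_2\times G)$ whose clique complex is $\mathrm{colim}_r\,\mathrm{Hom}(C_{2r}, G)$ on the even side and $\mathrm{colim}_r\,\mathrm{Hom}(C_{2r+1}, G)$ on the odd side; here the parity of the cycle is exactly what records whether a loop in $B(G)$ is a genuine loop ($\mathcal{L}$) or a $\mathbb{Z}_2$-equivariant loop $S^1\to B(G)$ ($\mathcal{L}'$), because an odd cycle maps to $K_2\times G$ "with a sign flip," i.e. it covers a loop in $B(G)$ that interchanges the two $\mathbb{Z}_2$-sheets.

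The second step is to identify these two colimits with $\mathcal{L}B(G)$ and $\mathcal{L}'B(G)$ $\mathbb{Z}_2$-equivariantly. I would do this by comparing the based pieces: fixing a basepoint $x:K_2\to X$ and using Theorem \ref{thm Matsushita}, $C(\Omega_{/K_2}(X,x))\simeq \Omega(B_{/K_2}(X),x)$. For the free version, one runs the standard argument that a free loop space fibers over the base with fiber the based loop space, $\Omega X \to \mathcal{L}X \to X$, and checks that the combinatorial colimit carries the analogous structure: the "evaluation at a vertex of $C_n$" map $\mathrm{colim}_r\,\mathrm{Hom}(C_{2r},G)\to B(G)$ has homotopy fiber the based model, hence by the five lemma (or by comparing the two fibration sequences of spaces, using that $B(G)$ is connected if $G$ is — and handling components otherwise) the total spaces are homotopy equivalent. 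The $\mathbb{Z}_2$-actions must be tracked throughout: on the cycle side the action is the canonical free $\mathbb{Z}_2$-action on $\mathrm{Hom}(C_n,G)$ that on the nose matches the deck transformation of $K_2\times G\to G$, and on the target side it is the $\mathbb{Z}_2$-action on $B(G)$; on $\mathcal{L}B(G)$ and $\mathcal{L}'B(G)$ it is the one induced from $B(G)$ and from the antipodal action on $S^1$ respectively. So the comparison map should be made $\mathbb{Z}_2$-equivariant by construction, and then one only needs a non-equivariant homotopy equivalence on total spaces together with equivariance of the maps — which already upgrades it to a $\mathbb{Z}_2$-homotopy equivalence because the $\mathbb{Z}_2$-actions are free (so a $\mathbb{Z}_2$-map that is an ordinary equivalence is automatically a $\mathbb{Z}_2$-equivalence).

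For the odd case the extra subtlety — and what I expect to be the main obstacle — is that $C_{2r+1}$ is \emph{not} a bigraph (it has no $2$-coloring), so it does not literally map into the bigraph $K_2\times G$; one has to work with $C_{2r+1}$ mapping to $K_2\times G$ as graphs and then observe that such a map is the same datum as a $\mathbb{Z}_2$-map from the double cover $C_{4r+2}$ (an even cycle!) to $K_2\times G$ commuting with the two $\mathbb{Z}_2$-actions, equivalently a $\mathbb{Z}_2$-map $S^1\to B(G)$ after realization. Getting the bookkeeping of this "unfolding" correct — matching the colimit over $r$ of $\mathrm{Hom}(C_{2r+1},G)$ with the colimit of $\mathbb{Z}_2$-maps, and seeing that the stabilization maps correspond — is the delicate point; once it is done, the odd case runs exactly parallel to the even one, using the $\mathbb{Z}_2$-equivariant version of Theorem \ref{thm Matsushita} with the basepoint-interchanging loops. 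A final remark: because everything is assembled from homotopy colimits of finite complexes and the stabilization maps are cofibrations up to homotopy, the passage to the colimit commutes with taking loop spaces and with the fibration sequences, so no point-set pathology intervenes.
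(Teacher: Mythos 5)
Your overall strategy diverges from the paper's at the crucial step, and the divergence introduces two genuine gaps. The paper does not compare fibration sequences at all: it characterizes $\L B_{/K_2}(X)$ as the homotopy pullback of the diagonal $B_{/K_2}(X) \to B_{/K_2}(X)\times B_{/K_2}(X)$ along itself (and $\L'$ as the pullback of $({\rm id},\alpha'')$ along the diagonal), and then proves a combinatorial-to-topological homotopy pullback theorem (Theorem \ref{thm 4.3}, resting on Proposition \ref{prop 4.2} and the Quillen--Theorem-B Corollary \ref{cor Quillen B}) which identifies the graph-theoretic pullback $\L X = X^{K_2}\times_{X^{K_2}\times X^{K_2}} X^L$, whose clique complex is ${\rm colim}_n\,\Hom_{/K_2}(C_{2n},X)$, with that homotopy pullback. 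Your route instead asserts that the evaluation map ${\rm colim}_r \Hom(C_{2r},G)\to B(G)$ has homotopy fiber the based model and then invokes a five-lemma comparison with $\Omega B(G)\to \L B(G)\to B(G)$. But a fibration is not determined by its fiber and base: to conclude anything about total spaces you need an actual (zig-zag of) maps between the two sequences, and constructing a map from the combinatorial colimit to $\L B(G)$ compatible with the stabilization maps is precisely the content you cannot take for granted --- it is never produced in your sketch. The homotopy-pullback formulation is what lets the paper avoid ever writing down such a map.

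The second gap is the equivariance upgrade. You claim the $\Z_2$-actions are free, so an equivariant map that is a nonequivariant equivalence is automatically a $\Z_2$-equivalence. The actions relevant to Theorem \ref{thm Schultz} are induced by \emph{reflections} of $S^1$ (respectively of the cycles $C_n$), and these are not free: constant loops are fixed in $\L B(G)$, reflection-symmetric multi-homomorphisms are fixed in $\Hom(C_{2r},G)$, and the corresponding fixed sets are generally nonempty and nontrivial. (You also seem to conflate this reflection action with the rotation/deck-transformation action matching $K_2\times G\to G$; those are different involutions.) Because the actions are not free, one must control the fixed-point data separately, which is exactly what the paper does via condition $(1)'$ of Remark \ref{rem 5.3} and the fixed-point analysis in Remark \ref{rem 4.2.1}. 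Your reading of the odd case --- that a graph map $C_{2r+1}\to G$ unfolds to an equivariant map from the even double cover into $K_2\times G$, landing in the twisted loop space $\L'$ --- is consistent with the combinatorics the paper uses (it works with $\Hom(C_{2n+1},X/\alpha)$ and the pullback along $({\rm id},\alpha')$), but the two gaps above affect the odd case equally.
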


Here we consider the $\Z_2$-actions on $\L B(G)$ and $\L' B(G)$ as the involutions induced by the reflections of $S^1$.

The rest of this paper is organized as follows. In Section 2, we introduce the notation and the terminology concerning graphs and box complexes. In Section 3, we review the bigraphs and their box complexes, and considered $\times$-homotopy theory \cite{Dochtermann 2} of bigraphs. In Section 4, we recall Quillen's theorem B for posets and prove a slight generalization of it. In Section 5, we introduce the loop (or path) space construction of bigraphs, and show Theorem \ref{thm Matsushita} (Example \ref{eg 4.4}) and Theorem \ref{thm Schultz} (Example \ref{eg 4.5} and Example \ref{eg 4.6}).

\section{Preliminaries}

In this section, we review relevant definitions and introduce the terminology. For an introduction to this subject, we refer to Kozlov \cite{Kozlov book}. For a poset $P$, the classifying space of $P$ (the geometric realization of the order complex) is denoted by $|P|$. We sometimes regard a poset as a topological space by its classifying space. For example, we say that two poset maps are homotopic if the continuous maps induced by them are homotopic.

A {\it graph} is a pair $G= (V(G), E(G))$ consisting of a set $V(G)$ together with a symmetric subset $E(G)$ of $V(G) \times V(G)$. Hence our graphs are undirected, may have loops, but have no multiple edges. A graph $G$ is {\it reflexive} if the diagonal $\Delta_{V(G)}$ is contained in $E(G)$. A {\it graph homomorphism} is a map $f:V(G) \rightarrow V(H)$ such that $(f \times f)(E(G)) \subset E(H)$. For a non-negative integer $n$, the {\it complete graph $K_n$ with $n$-vertices} is the graph defined by $V(K_n) = \{ 0,1,\cdots, n-1\}$ and $E(K_n) = \{ (x,y) \; | \; x \neq y\}$. The category of graphs is denoted by $\mathcal{G}$.

The categorical product $G \times H$ of graphs is defined by
$$V(G \times H) = V(G) \times V(H),$$
$$E(G \times H) = \{ ((x,y),(x',y')) \; | \; (x,x') \in E(G), \; (y,y') \in E(H)\}.$$
We call $K_2 \times G$ the Kronecker double covering \cite{IP} over $G$.

For a vertex $v$ of $G$, let $N(v)$ be the set of vertices adjacent to $v$. For a set $\sigma$ of vertices of $G$, a {\it common neighbor of $\sigma$} is a vertex $v$ with $\sigma \subset N(v)$. The {\it neighborhood complex $N(G)$} is the simplicial complex consisting of finite subsets which have a common neighbor.

The {\it box complex of a graph $G$} is the poset
$$B(G) = \{ (\sigma, \tau) \; | \; \textrm{$\sigma, \tau \in 2^{V(G)} \setminus \{ \emptyset\} $, $\# \sigma , \# \tau < \infty, $ and $\sigma \times \tau \subset E(G).$} \}$$ 
ordered by the product of the inclusion orderings. We regard $B(G)$ as a $\Z_2$-poset whose $\Z_2$-action is the exchange of the first and second entries. In fact there are other definitions of box complexes, and comparisons among them are found in \cite{Zivaljevic}.

\begin{thm}[Babson-Kozlov \cite{BK1}]
There is a natural homotopy equivalence
$$\begin{CD}
|B(G)| @>{\simeq}>> |N(G)|.
\end{CD}$$
\end{thm}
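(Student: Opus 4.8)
The plan is to realize the claimed equivalence by the first--coordinate projection. Write $\mathrm{CN}(\sigma)$ for the set of common neighbors of a finite set $\sigma$ of vertices, and let $\mathcal{F}(N(G))$ denote the face poset of $N(G)$, so that $|\mathcal{F}(N(G))|\cong|N(G)|$. Define $p\colon B(G)\to\mathcal{F}(N(G))$ by $p(\sigma,\tau)=\sigma$. This is well defined, since $\sigma\times\tau\subset E(G)$ with $\tau\neq\emptyset$ forces every vertex of $\tau$ to be a common neighbor of $\sigma$, so that $\sigma$ is a simplex of $N(G)$; and $p$ is monotone, since $(\sigma,\tau)\le(\sigma',\tau')$ implies $\sigma\subset\sigma'$. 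It is moreover natural in $G$: a graph homomorphism $\varphi\colon G\to H$ induces $B(\varphi)\colon B(G)\to B(H)$ and a monotone map $\mathcal{F}(N(G))\to\mathcal{F}(N(H))$ on face posets, and the resulting square commutes. So the theorem will follow once we show that $|p|$ is a homotopy equivalence, and for this we invoke Quillen's fiber lemma (a poset version of Quillen's Theorem A, see \cite{Kozlov book}): it suffices to prove that for every simplex $F$ of $N(G)$ the fiber $p_{\le F}=\{(\sigma,\tau)\in B(G)\mid\sigma\subset F\}$ has contractible classifying space.

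To see this, fix $F\in N(G)$ and a common neighbor $w\in\mathrm{CN}(F)$ (which exists because $F$ is a simplex of $N(G)$). Since $\sigma\subset F$ implies $\mathrm{CN}(F)\subset\mathrm{CN}(\sigma)$, the following monotone self-maps of $p_{\le F}$ are all well defined and land in $p_{\le F}$: $g_1=\mathrm{id}$; $g_2(\sigma,\tau)=(\sigma,\tau\cup\{w\})$; $g_3(\sigma,\tau)=(\sigma,\{w\})$; and $g_4$, the constant map with value $(F,\{w\})$. One checks directly that $g_1\le g_2\ge g_3\le g_4$ holds pointwise. By the order--homotopy lemma --- poset maps related by a zig-zag of pointwise inequalities induce homotopic maps on classifying spaces --- the identity of $p_{\le F}$ is homotopic to a constant map, so $|p_{\le F}|$ is contractible. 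The fiber lemma then yields the natural homotopy equivalence $|B(G)|\xrightarrow{\simeq}|N(G)|$.

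The only delicate point is the contractibility of the fibers. The naive attempt to deformation--retract $p_{\le F}$ onto its subposet $\{(F,\tau)\}$ breaks down, because $(\sigma,\tau)$ need not be comparable to $(F,\tau)$ once $\tau$ is not contained in $\mathrm{CN}(F)$; inserting the auxiliary common neighbor $w$ and passing through the four-step zig-zag above repairs this. It is worth recording that the construction is symmetric in the two coordinates of $B(G)$, which upgrades the statement to a $\Z_2$-equivariant one if desired, and that the resulting equivalence is compatible with the comparisons among the various box complexes in \cite{Zivaljevic}. Apart from these remarks the verification is routine.
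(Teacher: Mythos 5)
The paper does not prove this statement at all: it is quoted as a known result of Babson--Kozlov \cite{BK1} and used as a black box, so there is no internal proof to compare against. Your argument is a correct, self-contained proof. The projection $p(\sigma,\tau)=\sigma$ is well defined, monotone and natural as you say; the fiber $p_{\le F}=\{(\sigma,\tau)\mid \sigma\subset F\}$ is contractible via the zig-zag $\mathrm{id}\le g_2\ge g_3\le g_4$ (each pointwise inequality uses only $\sigma\subset F$ and $w\in\mathrm{CN}(F)$, and each intermediate pair does lie in $B(G)$ and in the fiber), and Quillen's fiber lemma then gives the homotopy equivalence $|B(G)|\to|\mathcal{F}(N(G))|\cong|N(G)|$. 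A virtue of your route worth noting: the usual closure-operator proof sends $\sigma$ to $(\sigma,\mathrm{CN}(\sigma))$, which is problematic under this paper's finiteness convention ($\#\tau<\infty$) when $G$ is infinite; picking a single common neighbor $w$ sidesteps that entirely. Two small caveats. First, the final remark that the construction ``is symmetric in the two coordinates'' is not literally true of $p$ itself (it projects to the first coordinate, and $N(G)$ carries no $\Z_2$-action), so the claimed $\Z_2$-upgrade of \emph{this} map should be deleted or rephrased; the genuinely $\Z_2$-equivariant comparisons live among the various box complexes, not between $B(G)$ and $N(G)$. Second, if you want the equivalence to be natural on the nose rather than up to the standard subdivision comparison, say explicitly that you are using the natural homotopy equivalence $|\mathcal{F}(N(G))|=|\mathrm{sd}\,N(G)|\to|N(G)|$. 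Neither point affects correctness.
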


A {\it multi-homomorphism} is a map $\eta : V(T) \rightarrow 2^{V(G)} \setminus \{ \emptyset\}$ such that $\eta (x)$ is finite for every $x \in V(T)$ and $(x,y) \in E(T)$ implies $\eta(x) \times \eta(y) \subset E(G)$. For a pair of multi-homomorphisms $\eta$ and $\eta'$, we write $\eta \leq \eta'$ if $\eta(v) \subset \eta'(v)$ for every $v \in V(T)$. The Hom complex $\Hom(T,G)$ is the poset consisting of the multi-homomorphisms from $T$ to $G$ ordered as above. Clearly, the Hom complex $\Hom(K_2,G)$ is isomorphic to the box complex $B(G)$.

A {\it (reflexive) clique of a graph $G$} is a set $\sigma$ of vertices of $G$ with $\sigma \times \sigma \subset E(G)$. The {\it (reflexive) clique complex of $G$} is a simplicial complex consisting of finite cliques. Note that $\Hom({\bf 1}, G)$ is the face poset of the clique complex of the maximal reflexive subgraph of $G$. So we write $C(G)$ instead of $\Hom({\bf 1}, G)$.

\begin{rem} \label{rem 2.1}
In \cite{Dochtermann 2}, Dochtermann does not assume the finiteness of a value of a multi-homomorphism at each point, and define the ``Hom complex" by the poset of multi-homomorphisms in this sense. However, the homotopy types of these two definitions are naturally homotopy equivalent (see Lemma 4.2 of \cite{Matsushita 2} and its previous paragraph).

We require the finite assumption because we use the following property: For a finite graph $T$, the functor $G \mapsto \Hom(T,G)$ preserves sequentially colimits in our definition.
\end{rem}

We now review some properties of Hom complexes as far as we need.

\begin{prop}[Proposition 3.8 of \cite{Dochtermann 1}] \label{prop 2.1.1}
Let $T$, $G$, and $H$ be graphs. Then there is a natural homotopy equivalence
$$\begin{CD}
\Hom(T,G) \times \Hom(T,H) @>{\simeq}>> \Hom(T,G \times H).
\end{CD}$$
\end{prop}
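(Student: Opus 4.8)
The plan is to exhibit an explicit poset map in the opposite direction and to show that the two composites are, respectively, the identity and a map comparable to the identity in the poset order; the homotopy equivalence then follows from the standard fact that order-comparable poset maps induce homotopic maps on classifying spaces.

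First I would write down the candidate equivalence. Given multi-homomorphisms $\eta \colon V(T) \to 2^{V(G)}\setminus\{\emptyset\}$ and $\eta'\colon V(T)\to 2^{V(H)}\setminus\{\emptyset\}$, set $\Phi(\eta,\eta')(v) = \eta(v)\times\eta'(v) \subset V(G\times H)$. For $(x,y)\in E(T)$ and $((a,b),(a',b'))$ with $a\in\eta(x)$, $a'\in\eta(y)$, $b\in\eta'(x)$, $b'\in\eta'(y)$, one has $(a,a')\in E(G)$ and $(b,b')\in E(H)$, hence $((a,b),(a',b'))\in E(G\times H)$; since each $\eta(v)\times\eta'(v)$ is a nonempty finite set, $\Phi(\eta,\eta')$ is a multi-homomorphism. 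Monotonicity in each variable is immediate, so $\Phi$ is a poset map $\Hom(T,G)\times\Hom(T,H)\to\Hom(T,G\times H)$, and it is visibly natural in $T$, $G$, $H$.

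Next I would build the inverse from the two projections $p_G\colon G\times H\to G$ and $p_H\colon G\times H\to H$. For a multi-homomorphism $\zeta\colon V(T)\to 2^{V(G\times H)}\setminus\{\emptyset\}$, let $\Psi(\zeta) = \big(v\mapsto p_G(\zeta(v)),\, v\mapsto p_H(\zeta(v))\big)$, where $p_G(\zeta(v))$ denotes the image set. Each image is nonempty and finite, and if $(x,y)\in E(T)$ then $\zeta(x)\times\zeta(y)\subset E(G\times H)$ forces $p_G(\zeta(x))\times p_G(\zeta(y))\subset E(G)$, and similarly for $H$; so $\Psi$ is a well-defined monotone map $\Hom(T,G\times H)\to\Hom(T,G)\times\Hom(T,H)$. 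Because $\eta'(v)\neq\emptyset$ we get $p_G(\eta(v)\times\eta'(v))=\eta(v)$, and dually, so $\Psi\circ\Phi = \mathrm{id}$. In the other direction, for every $v$ we have $\zeta(v)\subset p_G(\zeta(v))\times p_H(\zeta(v)) = \Phi(\Psi(\zeta))(v)$, i.e.\ $\mathrm{id} \le \Phi\circ\Psi$ as poset maps.

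Finally I would invoke the standard lemma that two poset maps $f\le g$ induce homotopic maps on classifying spaces (a common upper bound provides an explicit homotopy): this yields $\Phi\circ\Psi\simeq\mathrm{id}$, and together with $\Psi\circ\Phi=\mathrm{id}$ we conclude that $\Phi$ is a homotopy equivalence. I do not expect a serious obstacle here; the only points that need care are checking that the finiteness condition on multi-homomorphisms (Remark \ref{rem 2.1}) is preserved throughout — automatic, since products and images of finite sets are finite — and that the order-homotopy lemma is applied to the composite $\Phi\circ\Psi$ with the comparison pointing in the right direction.
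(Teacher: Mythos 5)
Your proof is correct; the paper gives no proof of this statement (it is quoted verbatim from Dochtermann, Proposition 3.8), but your argument---a section--retraction pair with $\Psi\circ\Phi=\mathrm{id}$ and $\Phi\circ\Psi\ge\mathrm{id}$, followed by the order-homotopy lemma---is the standard one and is precisely the same closure-operator pattern the paper itself uses in the proof of Lemma \ref{lem 3.2.2}(2). All the checks (nonemptiness, finiteness, adjacency, monotonicity, naturality) are carried out correctly, so there is nothing to add.
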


Let $G$ and $H$ be graphs. Two graph homomorphisms $f$ and $g$ are {\it $\times$-homotopic} (see \cite{Dochtermann 1}) if they belong to the same connected component of $\Hom(G,H)$.

\begin{prop}[Theorem 5.1 of \cite{Dochtermann 1}] \label{prop 2.2}
If graph homomorphisms $f,g : G \rightarrow H$ are $\times$-homotopic, then the induced maps $f_*, g_* : \Hom(T,G) \rightarrow \Hom(T,H)$ are homotopic.
\end{prop}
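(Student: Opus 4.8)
The plan is to exhibit $f_*$ and $g_*$ as two ``slices'' of a single order-preserving map of posets and then invoke the standard homotopy lemma for posets. First I would note that multi-homomorphisms compose: given multi-homomorphisms $\eta \colon G \to H$ and $\alpha \colon T \to G$, define $\eta \bullet \alpha$ on $V(T)$ by $(\eta\bullet\alpha)(x) = \bigcup_{u \in \alpha(x)} \eta(u)$. This set is nonempty and, crucially for the finiteness convention of Remark \ref{rem 2.1}, finite, being a finite union of finite sets. If $(x,y) \in E(T)$ then $\alpha(x)\times\alpha(y) \subset E(G)$, so for $u \in \alpha(x)$ and $w \in \alpha(y)$ we have $\eta(u)\times\eta(w)\subset E(H)$; unioning over all such $u,w$ shows $(\eta\bullet\alpha)(x)\times(\eta\bullet\alpha)(y)\subset E(H)$, hence $\eta\bullet\alpha \in \Hom(T,H)$. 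Since $\eta\bullet\alpha$ is visibly monotone in $\eta$ and in $\alpha$ separately, we obtain an order-preserving map
$$\Phi \colon \Hom(G,H)\times\Hom(T,G) \longrightarrow \Hom(T,H), \qquad \Phi(\eta,\alpha) = \eta\bullet\alpha.$$
Identifying a graph homomorphism $h$ with the multi-homomorphism $v \mapsto \{h(v)\}$, one checks immediately that $\Phi(f,-)$ is the induced map $f_*$ and $\Phi(g,-)$ is $g_*$.

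Next I would use the other slicing. For $\eta \le \eta'$ in $\Hom(G,H)$ the two order-preserving maps $\Phi(\eta,-),\ \Phi(\eta',-) \colon \Hom(T,G) \to \Hom(T,H)$ satisfy $\Phi(\eta,\alpha) \le \Phi(\eta',\alpha)$ for every $\alpha$; that is, they are comparable in the pointwise order, and hence (by the standard fact that pointwise-comparable poset maps induce homotopic maps on classifying spaces, see \cite{Kozlov book}) they are homotopic. Now $f$ and $g$ lie in the same connected component of $\Hom(G,H)$, which is precisely the definition of $\times$-homotopy, so they are joined by a finite zig-zag $f = \eta_0, \eta_1, \dots, \eta_m = g$ of multi-homomorphisms in which consecutive terms are comparable. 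Applying the previous remark to each step and concatenating the resulting homotopies gives $f_* = \Phi(f,-) \simeq \Phi(g,-) = g_*$, as desired.

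I do not anticipate a genuine obstacle here; the two points needing care are that composition of multi-homomorphisms respects the finiteness requirement (the reason Remark \ref{rem 2.1} is relevant) and that ``$\times$-homotopic'' must be unwound into a \emph{finite} zig-zag in $\Hom(G,H)$, so that only finitely many applications of the homotopy lemma are needed. Should one prefer to avoid the explicit composition formula, an alternative is to combine the functoriality of $\Hom(T,-)$ with the natural equivalence of Proposition \ref{prop 2.1.1}; but the bimonotone-map argument above is the shortest route and has the bonus of making the naturality of the whole construction transparent.
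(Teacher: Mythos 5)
Your proof is correct, and it is essentially the approach the paper itself takes: Proposition \ref{prop 2.2} is only cited (it is Theorem 5.1 of \cite{Dochtermann 1}), but the paper's proof of the bigraph analogue, Lemma \ref{lem 3.2}, uses exactly your composition map $(\tau,\eta)\mapsto\tau*\eta$ with $(\tau*\eta)(v)=\bigcup_{w\in\eta(v)}\tau(w)$. The only cosmetic difference is that you run the argument combinatorially (a finite zig-zag of comparable elements plus the order homotopy lemma) whereas the paper realizes the composition map topologically and pushes a path in $|\Hom(G,H)|$ through it; both are sound.
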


For a non-negative integer $n$, define the reflexive graph $I_n$ by $V(I_n) = \{ 0,1,\cdots, n\}$ and $E(I_n) = \{ (x,y) \; | \; |x-y| \leq 1\}$. A {\it $\times$-homotopy from $f$ to $g$} is a graph homomorphism $F: G \times I_n \rightarrow H$ $(n \geq 0)$ such that $F(x,0) = f(x)$ and $F(x,n) = g(x)$ for every $x \in V(G)$.

\begin{prop}[Proposition 4.7 of \cite{Dochtermann 1}] \label{prop 2.3}
Graph homomorphisms $f$ and $g$ are $\times$-homotopic if and only if there is a $\times$-homotopy from $f$ to $g$.
\end{prop}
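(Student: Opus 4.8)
The plan is to prove the two implications separately; both are elementary, the recurring point being that each $I_n$ is reflexive, so that for $(x,y)\in E(G)$ every pair $(i,j)$ with $|i-j|\le 1$ gives an edge $((x,i),(y,j))$ of $G\times I_n$.

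First I would show that a $\times$-homotopy $F:G\times I_n\to H$ forces $f$ and $g$ into the same connected component of $\Hom(G,H)$ (the case $n=0$ being trivial, since then $f=g$). For $0\le i\le n-1$ set $\eta_i(x)=\{F(x,i),F(x,i+1)\}$. Reflexivity of $I_n$ gives, for $(x,y)\in E(G)$, the four edges $((x,i),(y,i))$, $((x,i),(y,i+1))$, $((x,i+1),(y,i))$, $((x,i+1),(y,i+1))$ of $G\times I_n$, whose $F$-images lie in $E(H)$; hence $\eta_i(x)\times\eta_i(y)\subset E(H)$, so $\eta_i$ is a multi-homomorphism $G\to H$. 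Regarding each graph homomorphism $F(-,j)$ as the singleton-valued multi-homomorphism $x\mapsto\{F(x,j)\}$, we have $F(-,i)\le\eta_i$ and $F(-,i+1)\le\eta_i$, and therefore
$$f=F(-,0)\le\eta_0\ge F(-,1)\le\eta_1\ge\cdots\le\eta_{n-1}\ge F(-,n)=g$$
is a zigzag of comparabilities in the poset $\Hom(G,H)$. Since two elements of a poset lie in the same component of its classifying space exactly when they are joined by such a zigzag, $f$ and $g$ are $\times$-homotopic.

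For the converse I would first record two sublemmas. \emph{One-step homotopy:} if graph homomorphisms $f',g':G\to H$ satisfy $f'\le\eta$ and $g'\le\eta$ for a single multi-homomorphism $\eta$, then $(x,0)\mapsto f'(x)$, $(x,1)\mapsto g'(x)$ defines a graph homomorphism $G\times I_1\to H$, hence a $\times$-homotopy from $f'$ to $g'$: for $(x,y)\in E(G)$ the edges $(f'(x),f'(y))$ and $(g'(x),g'(y))$ lie in $E(H)$ because $f',g'$ are homomorphisms, while $(f'(x),g'(y))$ and $(g'(x),f'(y))$ lie in $E(H)$ because $f'(x)\in\eta(x)$, $g'(y)\in\eta(y)$ and $\eta(x)\times\eta(y)\subset E(H)$ (the loops of $I_1$ being exactly what produces all four checks). \emph{Concatenation:} given $\times$-homotopies $F:G\times I_n\to H$ and $F':G\times I_m\to H$ with $F(-,n)=F'(-,0)$, the map $G\times I_{n+m}\to H$ sending $(x,i)$ to $F(x,i)$ for $i\le n$ and to $F'(x,i-n)$ for $i\ge n$ is well defined, and it is a graph homomorphism because any two $I$-coordinates differing by at most $1$ are both $\le n$ or both $\ge n$, so every edge of $G\times I_{n+m}$ is sent to an edge of $H$ by $F$ or by $F'$.

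Granting these, suppose $f$ and $g$ are $\times$-homotopic, and pick a zigzag $f=\eta_0,\eta_1,\dots,\eta_m=g$ of multi-homomorphisms with each $\eta_i$ comparable to $\eta_{i+1}$. For each $i$ choose a graph homomorphism $\phi_i\le\eta_i$ by pointwise selection $\phi_i(x)\in\eta_i(x)$ (this is a homomorphism, again by $\eta_i(x)\times\eta_i(y)\subset E(H)$); necessarily $\phi_0=f$ and $\phi_m=g$. Whichever of $\eta_i\le\eta_{i+1}$ or $\eta_{i+1}\le\eta_i$ holds, both $\phi_i$ and $\phi_{i+1}$ lie below the larger one, so the one-step lemma gives a $\times$-homotopy from $\phi_i$ to $\phi_{i+1}$; concatenating the $m$ of these yields a $\times$-homotopy from $f$ to $g$. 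I expect nothing genuinely hard in all of this: the only care needed is the bookkeeping that pointwise selections and glued maps really are graph homomorphisms, which in every instance reduces to the reflexivity of the $I_n$ and to the defining inclusion $\eta(x)\times\eta(y)\subset E(H)$ for multi-homomorphisms. The proposition is essentially a dictionary between the poset description of $\times$-homotopy and the description via the intervals $I_n$, with no topological or homological input.
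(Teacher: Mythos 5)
Your argument is correct and complete: the zigzag of multi-homomorphisms $F(-,i)\le\eta_i\ge F(-,i+1)$ in one direction, and the pointwise-selection plus one-step-plus-concatenation argument in the other, is exactly the standard proof of this equivalence. The paper itself gives no proof (it quotes Proposition 4.7 of Dochtermann), so there is nothing to compare against beyond noting that your route is essentially Dochtermann's original one, with the only points requiring care --- reflexivity of $I_n$ and the condition $\eta(x)\times\eta(y)\subset E(H)$ --- all checked correctly.
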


Let $G$ and $H$ be graphs. The {\it exponential graph ${\rm exp}(G,H)$} is defined as follows: A vertex of ${\rm exp}(G,H)$ is a map from $V(G)$ to $V(H)$. Two maps $f$ and $g$ are adjacent if and only if $(f \times g)(E(G)) \subset E(H)$. It is easy to see $\mathcal{G}(T \times G, H) \cong \mathcal{G}(T, H^G)$.

\begin{prop}[Proposition 3.5 of \cite{Dochtermann 1}] \label{prop 2.4}
There is a natural homotopy equivalence
$$\begin{CD}
\Hom(T \times G, H) @>{\simeq}>> \Hom(T, {\rm exp}(G,H)).
\end{CD}$$
\end{prop}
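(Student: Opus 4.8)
Proof proposal for Proposition 2.4 ($\Hom(T \times G, H) \simeq \Hom(T, \exp(G,H))$).

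The plan is to upgrade the bijection $\mathcal{G}(T \times G, H) \cong \mathcal{G}(T, \exp(G,H))$ to the level of multi-homomorphisms, and then check that the resulting correspondence is a poset map with a homotopy-inverse. First I would spell out the adjunction at the multi-homomorphism level. A multi-homomorphism $\eta : T \times G \to H$ should correspond to a multi-homomorphism $\widehat{\eta} : T \to \exp(G,H)$ whose value $\widehat{\eta}(t)$ at a vertex $t$ of $T$ is the set of those maps $f : V(G) \to V(H)$ satisfying $f(g) \in \eta(t,g)$ for all $g \in V(G)$ — i.e. $\widehat{\eta}(t) = \prod_{g} \eta(t,g)$ viewed inside $V(\exp(G,H))$. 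One then checks: (i) each such $f$ is actually a vertex of $\exp(G,H)$, using that $\eta(t,g) \times \eta(t,g') \subset E(H)$ whenever $(g,g') \in E(G)$ (which follows from $(t,g)\sim(t,g')$ in $T\times G$ using the loop at $t$ — here one must be slightly careful, since $T$ need not be reflexive; the cleaner route is to verify the edge condition directly from the definition of $\exp$ rather than from a loop); (ii) if $(t,t') \in E(T)$ then $\widehat{\eta}(t) \times \widehat{\eta}(t') \subset E(\exp(G,H))$, i.e. any $f \in \widehat{\eta}(t)$, $f' \in \widehat{\eta}(t')$ are adjacent in $\exp(G,H)$, which again reduces to the edge condition on $\eta$; and (iii) $\widehat\eta(t)$ is finite and nonempty. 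Nonemptiness needs that each $\eta(t,g)$ is nonempty and the product is taken over the (possibly infinite) vertex set of $G$ — this is exactly the place where, for $G$ infinite, one does not literally get a finite set, so the construction as stated should be read with $G$ a finite graph (consistent with Remark 2.1 and the use of $\exp$ in this paper), or one restricts to the relevant finite subsets.

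The map $\eta \mapsto \widehat\eta$ is visibly order-preserving, since $\eta \leq \eta'$ gives $\eta(t,g) \subset \eta'(t,g)$ for all $g$, hence $\widehat\eta(t) \subset \widehat{\eta'}(t)$. The key point is that it is in fact a poset isomorphism in the ``product'' direction: conversely, given $\xi : T \to \exp(G,H)$, define $\check\xi(t,g) = \{ f(g) \mid f \in \xi(t)\}$; one checks this is a multi-homomorphism $T \times G \to H$ and that $\widehat{(\check\xi)} \geq \xi$ with equality when $\xi$ is ``product-closed'' (each $\xi(t)$ is a box $\prod_g \xi(t)_g$). So $\Hom(T \times G, H)$ is isomorphic to the subposet $P \subset \Hom(T, \exp(G,H))$ of product-closed multi-homomorphisms. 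It then remains to show the inclusion $P \hookrightarrow \Hom(T, \exp(G,H))$ is a homotopy equivalence. For this I would use a closure operator: send $\xi$ to its ``box closure'' $\mathrm{cl}(\xi)(t) = \prod_g \{ f(g) \mid f \in \xi(t)\}$, check it is a well-defined multi-homomorphism into $\exp(G,H)$ (the nontrivial part being that enlarging each $\xi(t)$ to its box keeps adjacencies, which follows because the defining condition $(f\times f')(E(G))\subset E(H)$ only constrains values coordinate-wise), observe $\xi \leq \mathrm{cl}(\xi)$, $\mathrm{cl}(\xi) \in P$, and $\mathrm{cl}$ is order-preserving and idempotent. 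A closure operator with values in a subposet and satisfying $\mathrm{id} \leq \mathrm{cl}$ gives a deformation retraction of classifying spaces (it is a poset map homotopic to the identity with image in $P$, and its restriction to $P$ is the identity), hence $|P| \simeq |\Hom(T, \exp(G,H))|$. Composing, $|\Hom(T\times G,H)| \cong |P| \simeq |\Hom(T,\exp(G,H))|$, and naturality in $T, G, H$ is clear because every map in sight is given by the same coordinate-wise formulas.

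The main obstacle I anticipate is not any single hard step but the bookkeeping around the asymmetry between ``box-closed'' multi-homomorphisms and general ones, together with the finiteness/nonemptiness caveats when $G$ is infinite: one has to be careful that $\prod_{g \in V(G)}\{f(g)\mid f\in\xi(t)\}$ is again a legitimate (finite, nonempty) value, which is automatic for finite $G$ but must be flagged otherwise, and that the closure operator genuinely lands in $\Hom(T,\exp(G,H))$ rather than merely in the poset of maps $T \to V(\exp(G,H))$. Once the closure operator is shown to be a well-defined order-preserving idempotent with $\mathrm{id}\le\mathrm{cl}$, the homotopy equivalence is formal, so I would concentrate the write-up on verifying those properties and on the coordinate-wise edge-condition checks, leaving naturality as a routine diagram chase.
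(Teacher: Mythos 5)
Your proposal is correct and follows essentially the same route as the paper: the adjunction maps you call $\eta\mapsto\widehat{\eta}$ and $\xi\mapsto\check{\xi}$ are exactly the order-preserving maps $\Phi$ and $\Psi$ sketched in Lemma 3.2.2, with $\Psi\circ\Phi={\rm id}$ and $\Phi\circ\Psi\geq{\rm id}$, and your closure-operator packaging of the retraction onto the product-closed subposet is just a rephrasing of the standard fact that comparable poset maps are homotopic. Your finiteness caveat for infinite $G$ is a legitimate observation about the paper's convention of finite-valued multi-homomorphisms, but it is harmless in all the uses made here (where $G$ is $K_2$ or a finite path/cycle).
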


The proof of the following lemma is straightforward, and is omitted.

\begin{lem} \label{lem 2.4}
Let $f$, $g: G \rightarrow H$ be graph homomorphisms, and define the map $F: V(G) \times V(I_1) \rightarrow V(H)$ by $F(x,0) = f(x)$ and $F(x,1) = g(x)$ for $x \in V(G)$. Then $F$ is a graph homomorphism from $G \times I_1$ to $H$ if and only if $(f \times g)(E(G)) \subset E(H)$.
\end{lem}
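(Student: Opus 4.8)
The statement is a routine unwinding of definitions, so the plan is to simply check both implications by translating the condition ``$F$ is a graph homomorphism $G \times I_1 \to H$'' into a statement about edges of $G$ and $H$. First I would recall that, by the definition of the categorical product, the edge set $E(G \times I_1)$ consists of pairs $((x,i),(x',i'))$ with $(x,x') \in E(G)$ and $(i,i') \in E(I_1)$; since $I_1$ is reflexive with $V(I_1) = \{0,1\}$ and every pair of vertices of $I_1$ is adjacent, the condition $(i,i') \in E(I_1)$ is automatic. Hence $((x,i),(x',i')) \in E(G \times I_1)$ if and only if $(x,x') \in E(G)$.

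Now $F$ is a graph homomorphism precisely when $(F \times F)(E(G \times I_1)) \subset E(H)$, i.e.\ when $(F(x,i), F(x',i')) \in E(H)$ for all $(x,x') \in E(G)$ and all $i,i' \in \{0,1\}$. Splitting into the four cases for $(i,i')$ and using $F(x,0) = f(x)$, $F(x,1) = g(x)$, this says exactly: $(f \times f)(E(G)) \subset E(H)$, $(g \times g)(E(G)) \subset E(H)$, $(f \times g)(E(G)) \subset E(H)$, and $(g \times f)(E(G)) \subset E(H)$. For the ``only if'' direction, the third of these is literally the desired conclusion. For the ``if'' direction, I would observe that $(g \times f)(E(G)) \subset E(H)$ follows from $(f \times g)(E(G)) \subset E(H)$ by the symmetry of $E(H)$ (swap the entries of each edge, using that $E(G)$ is symmetric too), and that $(f \times f)(E(G)) \subset E(H)$ and $(g \times g)(E(G)) \subset E(H)$ hold because $f$ and $g$ are themselves graph homomorphisms by hypothesis.

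There is no real obstacle here; the only thing to be slightly careful about is not to forget the diagonal/``same-level'' cases $(i,i') = (0,0)$ and $(1,1)$ when verifying that $F$ is a homomorphism in the ``if'' direction — these are covered precisely because $f$ and $g$ are assumed to be graph homomorphisms, which is why that hypothesis appears in the statement. This is exactly why the paper remarks that the proof is straightforward and omits it, and I would do the same in the final text, at most including the case analysis on $(i,i')$ as above.
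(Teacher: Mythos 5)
Your proof is correct and is exactly the routine definition-unwinding the paper has in mind when it omits the proof as straightforward; the case analysis on $(i,i')$, the use of the symmetry of $E(H)$ for the $(1,0)$ case, and the observation that the $(0,0)$ and $(1,1)$ cases are covered by $f$ and $g$ being homomorphisms are all as expected. No issues.
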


\section{Bigraphs}

In this section, we introduce the bigraphs and investigate their basic properties. We consider the box complex, Hom complex, and $\times$-homotopy theory of bigraphs.

A {\it bigraph} is a graph $X$ equipped with a 2-coloring $\varepsilon_X : X \rightarrow K_2$. For a bigraphs $X$ and $Y$, a {\it bigraph homomorphism from $X$ to $Y$} is a graph homomorphism $f:X \rightarrow Y$ such that $\varepsilon_Y \circ f = \varepsilon_X$. We write $\mathcal{G}_{/K_2}$ to indicate the category of bigraphs. For a bigraph $X$, set $V_i(X) = \varepsilon^{-1}(i)$ $(i=0,1)$.

Let $X$ and $Y$ be bigraphs. A multi-homomorphism $\eta \in \Hom(X,Y)$ is {\it 2-colored} if $\eta(v) \subset V_i(Y)$ for every $v \in V_i(X)$ $(i =0,1)$. Define the {\it Hom complex $\Hom_{/K_2}(X,Y)$ between bigraphs} to be the induced subposet of $\Hom(X,Y)$ consisting of 2-colored multi-homomorphisms. A bigraph homomorphism is identified with a minimal point of $\Hom_{/K_2}(X,Y)$.

For a bigraph $X$, the {\it box complex $B_{/K_2}(X)$ of $X$ (see \cite{Matsushita 3})} is the poset
$$\{ (\sigma, \tau) \; | \; \textrm{$\sigma \in 2^{V_0(X)} \setminus \{ \emptyset \} $, $\tau \in 2^{V_1(X)} \setminus \{ \emptyset\}, \# \sigma, \# \tau < \infty, $ and $\sigma \times \tau \subset E(X)$.}\}$$
ordered by the product of inclusions. Consider $K_2$ as a bigraph whose 2-coloring is the identity. Then the box complex $B_{/K_2}(X)$ is isomorphic to the Hom complex $\Hom_{/K_2}(K_2, X)$.

Next we consider $\times$-homotopy theory of bigraphs. Let $f,g:X \rightarrow Y$ be bigraph homomorphisms. Then $f$ and $g$ are {\it $\times$-homotopic} if and only if they belong to the same connected component of $\Hom_{/K_2}(X,Y)$, and in this case we write $f \simeq_{\times } g$. A bigraph homomorphism $f: X \rightarrow Y$ is a {\it $\times$-homotopy equivalence} if there is a bigraph homomorphism $g: Y \rightarrow X$ such that $gf \simeq_\times {\rm id}_X$ and $fg \simeq_{\times} {\rm id}_Y$.

\begin{lem}\label{lem 3.2}
Let $f$, $g: Y \rightarrow Z$ be bigraph homomorphisms. If $f \simeq_\times g$, then the induced maps
$$f_*, g_* :\Hom_{/K_2}(X,Y) \rightarrow \Hom_{/K_2}(X,Z)$$
are homotopic for every bigraph graph $X$.
\end{lem}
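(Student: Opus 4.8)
The plan is to reduce Lemma \ref{lem 3.2} to the corresponding statement for ordinary graphs, namely Proposition \ref{prop 2.2}, by exhibiting $\Hom_{/K_2}(X,-)$ as a natural retract-like piece of $\Hom(X,-)$ that is compatible with $\times$-homotopy. First I would observe that a $\times$-homotopy $f \simeq_\times g$ between bigraph homomorphisms $Y \to Z$ means, by definition, that $f$ and $g$ lie in the same connected component of $\Hom_{/K_2}(Y,Z)$, i.e.\ of an induced subposet of $\Hom(Y,Z)$. Using Proposition \ref{prop 2.3} (after checking its bigraph analogue), such a component path can be realized by a sequence of graph homomorphisms $Y \times I_1 \to Z$, each of which is automatically 2-colored in the appropriate sense because the intermediate vertices lie in $\Hom_{/K_2}$. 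So it suffices to treat the case where there is a single bigraph homomorphism $H : Y \times I_1 \to Z$ with $H(-,0) = f$ and $H(-,1) = g$, where $I_1$ is given the trivial 2-coloring (both vertices colored, say, by an idempotent that makes $\varepsilon_{Y\times I_1}$ agree with $\varepsilon_Y$ on the first factor).

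The core step is then: given such an $H$, produce a homotopy between $f_* , g_* : \Hom_{/K_2}(X,Y) \to \Hom_{/K_2}(X,Z)$. For $\eta \in \Hom_{/K_2}(X,Y)$ and $t \in \{0,1\}$, define $H_t(\eta) \in \Hom(X,Z)$ by $H_t(\eta)(v) = \{\, H(w,t) : w \in \eta(v) \,\}$; one checks that this is a 2-colored multi-homomorphism, since $\eta$ is 2-colored and $H(-,t)$ is a bigraph homomorphism, so in fact $H_t(\eta) \in \Hom_{/K_2}(X,Z)$, with $H_0 = f_*$ and $H_1 = g_*$. The map $\eta \mapsto \bigl(\eta' \mapsto H(\eta'(-),-)\bigr)$ more precisely assembles into a poset map $\Hom_{/K_2}(X,Y) \to \Hom_{/K_2}(X \times I_1, Z)$ — here one uses that a 2-colored multi-homomorphism $X \to Y$ composed with $H : Y \times I_1 \to Z$ produces a 2-colored multi-homomorphism $X \times I_1 \to Z$ — and then composes with the two inclusions $X = X \times \{t\} \hookrightarrow X \times I_1$. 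Since $I_1$ is a (reflexive, contractible) interval, the restriction maps to the two endpoints of $\Hom_{/K_2}(X \times I_1, Z)$ are connected by an elementary homotopy of poset maps (the three-step zig-zag through the map using all of $\{0,1\}$ at once), exactly as in the proof of Proposition \ref{prop 2.2}; composing this homotopy with the poset map above gives the required homotopy from $f_*$ to $g_*$. Alternatively, and perhaps cleaner, one invokes Proposition \ref{prop 2.2} directly in the bigraph setting: the exponential-graph and product arguments used to prove it go through verbatim once one restricts to 2-colored data, because all the constructions (product with $I_1$, evaluation, the canonical folds $I_1 \to I_0$) respect 2-colorings.

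The main obstacle I anticipate is bookkeeping rather than conceptual: one must verify that every auxiliary construction imported from the graph case — the product $X \times I_1$ as a bigraph, the endpoint inclusions, the multi-homomorphism $\eta \mapsto H \circ (\eta \times \mathrm{id})$ — lands inside the subcategory of bigraphs and the subposets of 2-colored multi-homomorphisms, and that the homotopy of poset maps constructed for graphs stays within $\Hom_{/K_2}$ at every stage. None of these checks is hard, but they must be done uniformly; the cleanest writeup is probably to state the bigraph analogues of Propositions \ref{prop 2.3} and \ref{prop 2.2} as immediate consequences of the graph versions (noting that 2-coloring constraints are closed under all the operations involved), and then Lemma \ref{lem 3.2} is literally the bigraph analogue of Proposition \ref{prop 2.2}.
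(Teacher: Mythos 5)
Your argument is correct, but it takes a different route from the paper. The paper's proof is a two-line topological argument: it introduces the composition pairing $\Hom_{/K_2}(Y,Z) \times \Hom_{/K_2}(X,Y) \to \Hom_{/K_2}(X,Z)$, $(\tau,\eta) \mapsto \tau * \eta$ with $(\tau*\eta)(v) = \bigcup_{w \in \eta(v)} \tau(w)$, observes it is order-preserving in both variables, and then simply composes a path $\varphi \colon [0,1] \to |\Hom_{/K_2}(Y,Z)|$ from $f$ to $g$ (which exists by the definition of $\simeq_\times$) with the realization of this pairing. That avoids any reduction to single steps and needs only the homeomorphism $|P\times Q| \cong |P|\times|Q|$. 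You instead follow Dochtermann's cylinder-object strategy: reduce to a single bigraph homomorphism $H\colon Y\times I_1 \to Z$ via the bigraph analogue of Proposition \ref{prop 2.3} (this is the implication $(1)\Rightarrow(2)$ of Proposition \ref{prop TFAE}, which appears after Lemma \ref{lem 3.2} in the paper but depends only on Lemma \ref{lem 3.3} and Lemma \ref{lem 3.2.2}, so there is no circularity), and then produce the homotopy $f_*\simeq g_*$ from the poset map $\eta \mapsto \bigl(v \mapsto \{H(w,t) : w\in\eta(v),\ t\in\{0,1\}\}\bigr)$, which dominates both $f_*(\eta)$ and $g_*(\eta)$ pointwise. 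All the 2-coloring checks you flag do go through (with the convention that $Y\times I_1$ is a bigraph via $\varepsilon_Y\circ p_1$; note $I_1$ itself, being reflexive, admits no 2-coloring, so your parenthetical about coloring $I_1$ should be phrased that way). Your approach is more combinatorial and stays at the level of pointwise-comparable poset maps, at the cost of the zig-zag reduction; the paper's is shorter and also yields functoriality of the homotopy in the path $\varphi$, which is what gets reused implicitly in Lemma \ref{lem 3.4}.
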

\begin{proof}
As is the case of the usual Hom complex, we have a composition map
$$\Hom_{/K_2}(Y,Z) \times \Hom_{/K_2}(X,Y) \longrightarrow \Hom_{/K_2}(X,Z), \; (\tau , \eta) \mapsto \tau * \eta$$
defined by
$$(\tau * \eta)(v) = \bigcup_{w \in \eta(v)} \tau(w).$$
Let $\varphi : [0,1] \rightarrow |\Hom_{/K_2}(Y,Z)|$ be a path joining $f$ to $g$. Then the composition
$$\begin{CD}
[0,1] \times |\Hom(X,Y)| @>{\varphi \times {\rm id}}>> |\Hom(Y,Z)| \times |\Hom(X,Y)| @>{|*|}>> |\Hom(X,Z)|
\end{CD}$$
gives a homotopy from $f_*$ to $g_*$.
\end{proof}

A principal example of a $\times$-homotopy equivalence is given by {\it folds} (see \cite{BK1} and \cite{Kozlov}), the deletion of a dismantlable vertex. A vertex $v$ of a bigraph $X$ is {\it dismantlable} if there is $w \in V(X)$ such that $v\neq w$ and $N(v) \subset N(w)$.

\begin{lem}[See Kozlov \cite{Kozlov}] \label{lem 3.2.1}
Let $X$ be a bigraph and $v$ a vertex of $X$. If $v$ is dismantlable, then the inclusion $X \setminus v \hookrightarrow X$ is a $\times$-homotopy equivalence.
\end{lem}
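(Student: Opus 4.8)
The plan is to run the standard fold argument, adapted to the bigraph setting: I will write down an explicit retraction $r\colon X\to X\setminus v$, check that it is a bigraph homomorphism, and then verify that $r$ is a $\times$-homotopy inverse of the inclusion $\iota\colon X\setminus v\hookrightarrow X$. Fix $w\in V(X)$ with $w\neq v$ and $N(v)\subset N(w)$, and define $r$ by $r(v)=w$ and $r(x)=x$ for every $x\neq v$.

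First I would check that $r\colon X\to X\setminus v$ is a bigraph homomorphism. If $v$ has a neighbour $u$, then $u\in N(v)\subset N(w)$, so $u$ is adjacent to both $v$ and $w$; as $K_2$ has only two colours this forces $\varepsilon_X(v)=\varepsilon_X(w)$, and hence $r$ respects the $2$-colourings (the remaining, degenerate case in which $v$ is isolated is handled by taking $w$ in the colour class of $v$). For the edges: an edge of $X$ not incident to $v$ is fixed by $r$; an edge $(v,u)$ with $u\neq v$ is sent to $(w,u)$, which lies in $E(X)$ because $u\in N(v)\subset N(w)$; and if $X$ carries a loop at $v$ then $v\in N(v)\subset N(w)$ gives in turn $w\in N(v)\subset N(w)$, so $(w,w)\in E(X)$. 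Thus $r$ is a bigraph homomorphism, and $r\iota={\rm id}_{X\setminus v}$ by construction, so this composite is certainly $\times$-homotopic to the identity.

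It remains to prove $\iota r\simeq_\times{\rm id}_X$ in $\Hom_{/K_2}(X,X)$. Consider $\mu\colon V(X)\to 2^{V(X)}\setminus\{\emptyset\}$ defined by $\mu(x)=\{x,(\iota r)(x)\}$, a map with finite values. As elements of the poset $\Hom(X,X)$ we have ${\rm id}_X\leq\mu$ and $\iota r\leq\mu$, so it suffices to show $\mu\in\Hom_{/K_2}(X,X)$. It is $2$-coloured by the previous paragraph, since $\varepsilon_X\big((\iota r)(x)\big)=\varepsilon_X(x)$. For the multi-homomorphism condition, let $(x,y)\in E(X)$; one checks that each of $(x,y)$, $(x,(\iota r)y)$, $((\iota r)x,y)$, $((\iota r)x,(\iota r)y)$ lies in $E(X)$: the first is given, the last holds since $\iota r$ is a graph homomorphism, and the middle two follow from the dismantlability inclusion applied at $y=v$ (resp.\ $x=v$) together with the symmetry of $E(X)$ — the same inequality that Lemma \ref{lem 2.4} records, so that equivalently $(x,0)\mapsto x$, $(x,1)\mapsto(\iota r)(x)$ defines a graph homomorphism $X\times I_1\to X$. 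Hence $\mu\in\Hom_{/K_2}(X,X)$ dominates both ${\rm id}_X$ and $\iota r$, so these two vertices lie in a common connected component of $\Hom_{/K_2}(X,X)$; that is, $\iota r\simeq_\times{\rm id}_X$. Combined with $r\iota={\rm id}_{X\setminus v}$, this shows $\iota$ is a $\times$-homotopy equivalence.

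The argument is essentially formal once Lemma \ref{lem 2.4} is available; the only points needing genuine attention are the compatibility of the fold $r$ with the $2$-colourings — this is exactly where the bigraph structure (and the non-isolatedness of $v$) enters — and the verification that $\mu$ is indeed a multi-homomorphism. I therefore expect the main obstacle here to be bookkeeping rather than anything conceptual.
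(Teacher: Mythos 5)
Your proof is correct and follows essentially the same route as the paper: the same fold retraction $r$ sending $v$ to $w$, and the same multi-homomorphism $\mu(x)=\{x,(\iota r)(x)\}$ (the paper's $\eta$) dominating both ${\rm id}_X$ and $\iota r$. You merely spell out the verifications that the paper leaves implicit, namely that $r$ respects the $2$-colouring and that $\mu$ is a $2$-coloured multi-homomorphism.
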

\begin{proof}
Let $i$ be the inclusion $X \setminus v \hookrightarrow X$, and $w$ a vertex of $X$ such that $w \neq v$ and $N(v) \subset N(w)$. Let $r: X \rightarrow X \setminus v$ be a retraction of $i$ which takes $v$ to $w$. Define $\eta \in \Hom(X,X)$ by
$$\eta(x) = \begin{cases}
\{ x \} & (x \neq v)\\
\{ v,w\} & (x = v).
\end{cases}$$
Then we have $ir \leq {\eta}$ and ${\rm id}_X \leq \eta$. Thus we have $ir \simeq_\times {\rm id}_X$ and $ri = {\rm id}_{X \setminus v}$.
\end{proof}

Let $X$ be a bigraph and $G$ a graph (see Section 2). Consider the product $X \times G$ as a bigraph whose 2-coloring is the composition
$$\begin{CD}
X \times G @>{p_1}>> X @>{\varepsilon_X}>> K_2,
\end{CD}$$
where $p_1$ is the first projection.

Let $X$ and $Y$ be bigraphs. Define the graph $Y^X$ to be the induced subgraph of the usual exponential graph (see Section 2) from $X$ to $Y$ whose vertices are maps from $V(X)$ to $V(Y)$ commutative with their 2-colorings.

\begin{lem} \label{lem 3.2.2}
Let $G$ be a graph, and $X$ and $Y$ bigraphs. The following assertions hold:
\begin{itemize}
\item[(1)] There is a natural isomorphism $\mathcal{G}_{/K_2}(X \times G, Y) \cong \mathcal{G}(G, Y^X)$.
\item[(2)] There is a natural homotopy equivalence $\Hom_{/K_2}(X \times G, Y) \simeq \Hom(G, Y^X)$.
\end{itemize}
\end{lem}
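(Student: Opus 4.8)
The plan is to handle the two parts separately: part (1) is a routine exponential adjunction, and part (2) is the only homotopically substantive statement.

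For part (1), I would imitate the ordinary exponential adjunction $\mathcal{G}(T\times G,H)\cong\mathcal{G}(T,H^G)$ recalled just before Proposition \ref{prop 2.4}. Given a bigraph homomorphism $f\colon X\times G\to Y$, define $\hat f\colon G\to Y^X$ by $\hat f(g)(x)=f(x,g)$. One checks that $\hat f(g)$ is a vertex of $Y^X$, i.e. commutes with the $2$-colorings: the $2$-coloring of $X\times G$ at $(x,g)$ is $\varepsilon_X(x)$, and $f$ preserves colorings, so $\varepsilon_Y(f(x,g))=\varepsilon_X(x)$. One also checks that $\hat f$ is a graph homomorphism: if $(g,g')\in E(G)$ and $(x,x')\in E(X)$ then $((x,g),(x',g'))\in E(X\times G)$, hence $(f(x,g),f(x',g'))\in E(Y)$, which says $(\hat f(g),\hat f(g'))\in E(Y^X)$. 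The assignment $\phi\mapsto\check\phi$ with $\check\phi(x,g)=\phi(g)(x)$ is the two-sided inverse, and naturality in all three variables is immediate from the formulas.

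For part (2), I would extend the evaluation (counit) map of this adjunction to multi-homomorphisms. Define a poset map
$$\Phi\colon\Hom(G,Y^X)\longrightarrow\Hom_{/K_2}(X\times G,Y),\qquad \Phi(\xi)(x,g)=\bigcup_{\phi\in\xi(g)}\{\phi(x)\}.$$
Then $\Phi(\xi)$ has finite values (since $\xi(g)$ is finite), is $2$-colored (since each $\phi\in\xi(g)$ preserves colorings), and is a multi-homomorphism: if $(x,x')\in E(X)$ and $(g,g')\in E(G)$, then $(\phi,\phi')\in E(Y^X)$ for all $\phi\in\xi(g)$, $\phi'\in\xi(g')$, whence $(\phi(x),\phi'(x'))\in E(Y)$; monotonicity of $\Phi$ is clear. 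It remains to prove $\Phi$ is a homotopy equivalence. When $X$ is finite this follows as in the proof of Proposition \ref{prop 2.4} (Proposition 3.5 of \cite{Dochtermann 1}): set $\Psi(\eta)(g)=\prod_{x\in V(X)}\eta(x,g)$, which is a finite-valued $2$-colored multi-homomorphism $G\to Y^X$ by the same edge computation, and check that $\Phi\Psi=\mathrm{id}$ on the nose while $\mathrm{id}\le\Psi\Phi$; since comparable poset maps are homotopic, $\Phi$ is a homotopy equivalence with inverse $\Psi$. For general $X$ this $\Psi$ need not be finite-valued, so instead I would invoke Quillen's Theorem A: for each $\eta\in\Hom_{/K_2}(X\times G,Y)$ the down-fiber $\{\xi:\Phi(\xi)\le\eta\}$ is nonempty (choose $\phi_g\in\prod_{x}\eta(x,g)$ for each $g$; then $g\mapsto\{\phi_g\}$ is a multi-homomorphism by the edge computation) and closed under pointwise unions (which remain multi-homomorphisms by the same computation), hence up-directed, hence contractible.

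The well-definedness and naturality checks are routine; the one place needing care is the homotopy equivalence in (2), where the naive inverse $\Psi$ clashes with our finiteness convention on multi-homomorphisms. The cleanest uniform resolution is the directed-fiber argument above; alternatively one restricts to finite $X$ — which suffices for every application in this paper, since there $X=K_2\times G$ with $G$ finite — and then the closure-operator argument of \cite{Dochtermann 1} applies verbatim.
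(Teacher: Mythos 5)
Your proof is correct, and for part (2) it takes a genuinely more careful route than the paper. The paper's own proof of (2) is a two-line sketch: it writes down the same evaluation map as yours (your $\Phi$, the paper's $\Psi$) together with the ``sections'' map $\eta \mapsto \bigl(v \mapsto \{ f : f(x) \in \eta(v,x) \text{ for all } x\}\bigr)$ (your $\Psi$, the paper's $\Phi$), and concludes from $\Psi\Phi = \mathrm{id}$ and $\Phi\Psi \geq \mathrm{id}$ exactly as in your finite-$X$ case. You correctly identify the weak point: under this paper's convention (Remark \ref{rem 2.1} insists multi-homomorphisms be finite-valued), the sections map need not land in $\Hom(G, Y^X)$ when $V(X)$ is infinite, since an infinite product of finite sets can be infinite. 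The paper's sketch silently ignores this and is literally valid only for finite $X$ --- which does cover every use made of the lemma here, where the exponent bigraph is $K_2$, $L_{-n,n+1}$, or $C_{2n}$ (a small imprecision in your remark: these are not of the form $K_2 \times G$, but that changes nothing). Your repair via Quillen's Theorem A is sound and proves the lemma in the generality in which it is stated: the down-fibers $\{\xi : \Phi(\xi) \leq \eta\}$ are nonempty by a choice of section and closed under pointwise unions, hence directed, hence contractible. The one step worth spelling out more than ``the same computation'' is that the union $\xi_1 \cup \xi_2$ of two elements of the fiber satisfies the cross-term edge condition $\xi_1(g) \times \xi_2(g') \subset E(Y^X)$ precisely because both $\Phi(\xi_1)$ and $\Phi(\xi_2)$ are dominated by the single multi-homomorphism $\eta$; a union of two arbitrary multi-homomorphisms would not be one. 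Part (1) is the routine adjunction that the paper omits, and your treatment of it is fine.
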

\begin{proof}
The proof of (1) is straightforward and is omitted. The proof of (2) is similar to the case of usual Hom complexes (see Proposition 3.5 of Dochtermann \cite{Dochtermann 1}). So we only give a sketch. Define the order-preserving maps $\Phi : \Hom_{/K_2}(G \times X, Y) \longrightarrow \Hom(G,Y^X)$ and $\Psi : \Hom(G,Y^X) \longrightarrow \Hom_{/K_2}(G \times X, Y)$ as follows:
$$\Phi(\eta)(v) = \{ f: V(X) \rightarrow V(Y) \; | \; \textrm{$f(x) \in \eta(v,x) $ for every $x \in V(X)$.}\},$$
$$\Psi (\eta) (v,x) = \{ f(x) \; | \; f \in \eta(v)\}.$$
Then one can show $\Psi \circ \Phi = {\rm id}$ and $\Phi \circ \Psi \geq {\rm id}$.
\end{proof}

\begin{cor} \label{cor 3.2.3}
For a bigraph $X$, we have $C(X^{K_2}) \simeq B_{/K_2}(X)$.
\end{cor}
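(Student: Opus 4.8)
The plan is to read off the corollary by chaining together three identifications already recorded in this section: the isomorphism $B_{/K_2}(X) \cong \Hom_{/K_2}(K_2, X)$ noted just after the definition of the box complex of a bigraph; the exponential-adjunction homotopy equivalence of Lemma \ref{lem 3.2.2}(2); and the definition $C(G) = \Hom({\bf 1}, G)$ recalled in Section 2, where ${\bf 1}$ is the graph with one looped vertex, i.e., the terminal object of $\mathcal{G}$.

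First I would note that the first projection $p_1 \colon K_2 \times {\bf 1} \to K_2$ is an isomorphism of graphs, and that it is in fact an isomorphism of bigraphs, since the $2$-coloring of $K_2 \times {\bf 1}$ is by construction $\varepsilon_{K_2} \circ p_1 = p_1$. Hence
$$B_{/K_2}(X) \cong \Hom_{/K_2}(K_2, X) \cong \Hom_{/K_2}(K_2 \times {\bf 1}, X).$$
Next I would invoke Lemma \ref{lem 3.2.2}(2), taking the graph in that lemma to be ${\bf 1}$ and its two bigraphs to be $K_2$ and $X$, which yields a homotopy equivalence
$$\Hom_{/K_2}(K_2 \times {\bf 1}, X) \simeq \Hom({\bf 1}, X^{K_2}) = C(X^{K_2}).$$
Composing these identifications produces the desired homotopy equivalence $C(X^{K_2}) \simeq B_{/K_2}(X)$.

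I do not expect any genuine obstacle here: all of the analytic content sits in Lemma \ref{lem 3.2.2}(2), which is already available, and the only thing left to check directly is the harmless bigraph isomorphism $K_2 \times {\bf 1} \cong K_2$. If one wanted the equivalence to be natural in $X$, one would simply observe that each map in the chain above is natural, but the statement as phrased asks only for a homotopy equivalence.
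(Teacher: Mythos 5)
Your proof is correct and is exactly the argument the paper intends: the corollary is stated without proof as an immediate consequence of Lemma \ref{lem 3.2.2}(2) applied with $G = {\bf 1}$, using $K_2 \times {\bf 1} \cong K_2$ and the identifications $B_{/K_2}(X) \cong \Hom_{/K_2}(K_2,X)$ and $C(X^{K_2}) = \Hom({\bf 1}, X^{K_2})$. Nothing is missing.
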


A $\times$-homotopy from $f$ to $g$ is a bigraph homomorphism $F: X \times I_n \rightarrow Y$ such that $F(x,0) = f(x)$ and $F(x,n) = g(x)$ for all $x \in V(X)$. The following lemma is easily verified and the proof is omitted.

\begin{lem} \label{lem 3.3}
Let $f$, $g:X \rightarrow Y$ be bigraph homomorphisms. Define the map $F: V(X) \times V(I_1) \rightarrow V(Y)$ by $F(x,0) = f(x)$ and $F(x,1) = g(x)$ for $x \in V(X)$. Then $F$ is a bigraph homomorphism from $X \times I_1$ to $Y$ if and only if there is a 2-colored multi-homomorphism $\eta$ with $f \leq \eta$ and $g \leq \eta$.
\end{lem}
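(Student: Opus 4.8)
The plan is to reduce the statement to the purely graph-theoretic criterion of Lemma~\ref{lem 2.4} and then read the multi-homomorphism condition off from it. First I would check that the 2-coloring requirement on $F$ is automatic. Indeed, $X \times I_1$ carries the 2-coloring $\varepsilon_X \circ p_1$, so $\varepsilon_{X \times I_1}(x,i) = \varepsilon_X(x)$; and since $f$ and $g$ are bigraph homomorphisms, $\varepsilon_Y(F(x,0)) = \varepsilon_Y(f(x)) = \varepsilon_X(x)$ and $\varepsilon_Y(F(x,1)) = \varepsilon_Y(g(x)) = \varepsilon_X(x)$. Hence $\varepsilon_Y \circ F = \varepsilon_{X \times I_1}$ holds unconditionally, so $F$ is a bigraph homomorphism $X \times I_1 \to Y$ precisely when it is a graph homomorphism between the underlying graphs. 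Applying Lemma~\ref{lem 2.4} to the underlying graphs of $X$ and $Y$, this is equivalent to $(f \times g)(E(X)) \subset E(Y)$.

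It then remains to show that $(f \times g)(E(X)) \subset E(Y)$ holds if and only if there is a 2-colored multi-homomorphism $\eta$ with $f \leq \eta$ and $g \leq \eta$. For the ``if'' direction, given such an $\eta$ and an edge $(x,x') \in E(X)$, we have $f(x) \in \eta(x)$ and $g(x') \in \eta(x')$, so $(f(x), g(x')) \in \eta(x) \times \eta(x') \subset E(Y)$; thus $(f \times g)(E(X)) \subset E(Y)$. For the ``only if'' direction I would take $\eta(v) = \{ f(v), g(v)\}$, which is a finite nonempty subset of $V(Y)$ and is 2-colored by the computation of the first paragraph (since $\varepsilon_Y(f(v)) = \varepsilon_Y(g(v)) = \varepsilon_X(v)$). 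For $(x,x') \in E(X)$, the four pairs in $\eta(x) \times \eta(x')$ all lie in $E(Y)$: $(f(x), f(x'))$ and $(g(x), g(x'))$ because $f$ and $g$ are homomorphisms, $(f(x), g(x'))$ because $(f \times g)(E(X)) \subset E(Y)$, and $(g(x), f(x'))$ because $(x',x) \in E(X)$ gives $(f(x'), g(x)) \in E(Y)$, whence $(g(x), f(x')) \in E(Y)$ by symmetry of $E(Y)$. Hence $\eta$ is a 2-colored multi-homomorphism with $f \leq \eta$ and $g \leq \eta$.

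There is essentially no obstacle here: the only point that needs a moment's thought is the symmetry argument showing that $(f \times g)(E(X)) \subset E(Y)$ already forces $(g(x), f(x')) \in E(Y)$ whenever $(x,x') \in E(X)$, so that the single ``mixed'' adjacency condition produced by $I_1$ is enough to guarantee the full product condition for $\eta = \{f,g\}$. Everything else is bookkeeping, which is why the paper records the lemma as easily verified.
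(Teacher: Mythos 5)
Your proof is correct, and it is exactly the routine verification the paper has in mind when it declares the lemma ``easily verified'' and omits the proof: reduce to Lemma \ref{lem 2.4} via the observation that the 2-coloring condition on $F$ is automatic, and then pass between $(f\times g)(E(X))\subset E(Y)$ and the multi-homomorphism $\eta(v)=\{f(v),g(v)\}$ using the symmetry of the edge relations. No gaps.
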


\begin{prop} \label{prop TFAE}
Let $f$ and $g$ be bigraph homomorphisms from $X$ to $Y$. Then the following hold:
\begin{itemize}
\item[(1)] $f$ and $g$ are $\times$-homotopic.
\item[(2)] There is a $\times$-homotopy from $f$ to $g$.
\item[(3)] $f$ and $g$ belong to the same connected component of the maximal reflexive subgraph of $Y^X$.
\end{itemize}
\end{prop}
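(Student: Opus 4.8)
The plan is to prove the cycle of implications $(1) \Rightarrow (3) \Rightarrow (2) \Rightarrow (1)$, with $(1) \Rightarrow (3)$ as the only substantial step. For $(2) \Rightarrow (1)$, I would start from a $\times$-homotopy $F : X \times I_n \to Y$ and restrict it to the induced subgraphs $X \times \{i\}$. Since $I_n$ is reflexive these are isomorphic to $X$, so the restrictions $f_i := F(-,i)$ are bigraph homomorphisms with $f_0 = f$ and $f_n = g$, and restricting $F$ to $X \times \{i, i+1\}$ exhibits the map $(x,0) \mapsto f_i(x)$, $(x,1) \mapsto f_{i+1}(x)$ as a bigraph homomorphism $X \times I_1 \to Y$. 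By Lemma \ref{lem 3.3} there is a 2-colored multi-homomorphism $\eta_i$ with $f_i \le \eta_i$ and $f_{i+1} \le \eta_i$, and then
$$f = f_0 \le \eta_0 \ge f_1 \le \eta_1 \ge \cdots \le \eta_{n-1} \ge f_n = g$$
is a fence in $\Hom_{/K_2}(X, Y)$, so $f \simeq_\times g$.

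For $(3) \Rightarrow (2)$, I would note first that the looped vertices of $Y^X$ are exactly the bigraph homomorphisms $X \to Y$; hence a walk from $f$ to $g$ in the maximal reflexive subgraph of $Y^X$ is a sequence $f = f_0, f_1, \dots, f_n = g$ of bigraph homomorphisms in which consecutive terms are equal or adjacent in $Y^X$. Putting $F(x,i) = f_i(x)$ then defines a bigraph homomorphism $X \times I_n \to Y$: for an edge $((x,i),(x',i'))$ of $X \times I_n$ one uses that $f_i$ is a homomorphism if $i = i'$ and that $f_i, f_{i'}$ are adjacent or equal in $Y^X$ if $|i-i'| = 1$, while the 2-colorings are clearly respected. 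This $F$ is a $\times$-homotopy from $f$ to $g$. (Alternatively, $(2) \Leftrightarrow (3)$ is the identification $\mathcal{G}_{/K_2}(X \times I_n, Y) \cong \mathcal{G}(I_n, Y^X)$ of Lemma \ref{lem 3.2.2}(1) combined with the fact that a homomorphism out of the reflexive graph $I_n$ automatically lands in the maximal reflexive subgraph.)

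The core is $(1) \Rightarrow (3)$. Suppose $f$ and $g$ lie in the same component of $\Hom_{/K_2}(X, Y)$, and pick a fence $f = \mu_0, \mu_1, \dots, \mu_{2m} = g$ of 2-colored multi-homomorphisms with consecutive terms comparable. Deleting any interior term sandwiched between two comparable neighbours, and using that the endpoints $f, g$ are minimal points, I may assume the fence is alternating with the odd-indexed terms as upper bounds, $\mu_{2i-2} \le \mu_{2i-1} \ge \mu_{2i}$. At each even index choose a selection $h_i(v) \in \mu_{2i}(v)$ for all $v \in V(X)$, with $h_0 = f$ and $h_m = g$ (which is forced): since the multi-homomorphism condition requires the whole product $\mu_{2i}(v) \times \mu_{2i}(v')$ to lie in $E(Y)$, any such selection is a graph homomorphism, and it is 2-colored because $\mu_{2i}$ is, so each $h_i$ is a bigraph homomorphism with $h_i \le \mu_{2i}$. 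The key observation is: if bigraph homomorphisms $a, b : X \to Y$ both satisfy $a \le \mu$ and $b \le \mu$ for some 2-colored multi-homomorphism $\mu$, then for every $(x,x') \in E(X)$ we have $a(x) \in \mu(x)$, $b(x') \in \mu(x')$, whence $(a(x), b(x')) \in E(Y)$; thus $a$ and $b$ are adjacent in $Y^X$, and since both are looped they are adjacent in its maximal reflexive subgraph. Applying this to $a = h_{i-1}$, $b = h_i$, $\mu = \mu_{2i-1}$ shows that $f = h_0, h_1, \dots, h_m = g$ is a walk in the maximal reflexive subgraph of $Y^X$, which is $(3)$.

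The obstacle I anticipate is organizational rather than conceptual: bringing the fence into alternating form with $f$ and $g$ at the even ends, and choosing the selections $h_i$ so that a single $h_i$ works for both peaks $\mu_{2i-1}$ and $\mu_{2i+1}$ adjacent to the valley $\mu_{2i}$. Once the ``key observation'' above is isolated and $(2) \Leftrightarrow (3)$ is recognized as the formal content of Lemma \ref{lem 3.2.2}(1), the remaining verifications are routine.
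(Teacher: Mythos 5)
Your proof is correct and follows essentially the same route as the paper, which disposes of the proposition in one line by invoking Lemma \ref{lem 3.3} for $(1)\Leftrightarrow(2)$ and Lemma \ref{lem 3.2.2}(1) for $(2)\Leftrightarrow(3)$. Your direct argument for $(1)\Rightarrow(3)$ via alternating fences and selections is exactly the content hidden in the paper's appeal to Lemma \ref{lem 3.3} (the bigraph analogue of Proposition \ref{prop 2.3}), so you are supplying details the paper leaves implicit rather than taking a different path.
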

\begin{proof}
Lemma \ref{lem 3.3} implies that (1) and (2) are equivalent. On the other hand, (1) of Lemma \ref{lem 3.2.2} implies that (2) and (3) are equivalent.
\end{proof}

\begin{lem} \label{lem 3.4}
Let $f,g : X \rightarrow Y$ be bigraph homomorphisms which are $\times$-homotopic. For every $2$-colored graph $Z$, the maps $Z^Y \rightarrow Z^X$ are $\times$-homotopic.
\end{lem}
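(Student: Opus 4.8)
The maps in question are the precomposition maps $f^{\ast}, g^{\ast} \colon Z^{Y} \to Z^{X}$, $\varphi \mapsto \varphi \circ f$ and $\varphi \mapsto \varphi \circ g$, induced by the contravariant functor $Z^{(-)}$. The plan is to construct an explicit $\times$-homotopy between $f^{\ast}$ and $g^{\ast}$ in the category $\mathcal{G}$ of graphs, and then to invoke Proposition \ref{prop 2.3} (applied to the graphs $Z^{Y}$ and $Z^{X}$) to conclude that they lie in the same connected component of $\Hom(Z^{Y}, Z^{X})$, i.e.\ that they are $\times$-homotopic.

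First I would use Proposition \ref{prop TFAE} to upgrade the hypothesis $f \simeq_{\times} g$ to an honest $\times$-homotopy, that is, a bigraph homomorphism $F \colon X \times I_{n} \to Y$ with $F(x,0) = f(x)$ and $F(x,n) = g(x)$ for all $x \in V(X)$; recall that $X \times I_{n}$ carries the $2$-coloring $\varepsilon_{X} \circ p_{1}$. Then I would define a map $H \colon V(Z^{Y}) \times V(I_{n}) \to V(Z^{X})$ by $H(\varphi, k)(x) = \varphi(F(x,k))$. The routine checks are that $H(\varphi,k)$ is indeed a vertex of $Z^{X}$ --- i.e.\ a $2$-colored map $V(X) \to V(Z)$, which is immediate from $\varepsilon_{Z} \circ \varphi = \varepsilon_{Y}$ and $\varepsilon_{Y} \circ F = \varepsilon_{X} \circ p_{1}$ --- and that $H(-,0) = f^{\ast}$, $H(-,n) = g^{\ast}$, which is clear from the boundary conditions on $F$.

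The one substantive point is to verify that $H$ is a graph homomorphism $Z^{Y} \times I_{n} \to Z^{X}$. Unwinding the definition of the exponential graph, this amounts to showing that for every edge $(\varphi,\varphi')$ of $Z^{Y}$, every edge $(k,k')$ of $I_{n}$, and every edge $(x,x')$ of $X$, the pair $(\varphi(F(x,k)), \varphi'(F(x',k')))$ is an edge of $Z$; and this follows by chaining three facts: $((x,k),(x',k'))$ is an edge of $X \times I_{n}$ (as $|k-k'| \le 1$), hence $(F(x,k), F(x',k'))$ is an edge of $Y$ since $F$ is a graph homomorphism, hence its image under $\varphi \times \varphi'$ is an edge of $Z$ since $(\varphi,\varphi') \in E(Z^{Y})$. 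Equivalently, one can avoid the hands-on verification by running the construction through the exponential adjunction of Lemma \ref{lem 3.2.2}(1): $F$ induces $F^{\ast} \colon Z^{Y} \to Z^{X \times I_{n}}$, one identifies $Z^{X \times I_{n}}$ with $(Z^{X})^{I_{n}}$ in the obvious way, and $H$ is the adjoint transpose of the resulting map $Z^{Y} \to (Z^{X})^{I_{n}}$. I do not expect a genuine obstacle here; the only thing demanding care is to keep straight which arrows are required to commute with the $2$-colorings and which (namely the maps between the exponential graphs $Z^{X}$, $Z^{Y}$) live in plain $\mathcal{G}$.
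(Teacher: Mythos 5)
Your proof is correct and is essentially the paper's argument in unwound form: the paper composes the graph homomorphism $Z^Y \times Y^X \to Z^X$, $(\varphi,h)\mapsto \varphi\circ h$, with a path from $f$ to $g$ in the maximal reflexive subgraph of $Y^X$ (equivalence of (1) and (3) in Proposition \ref{prop TFAE}), which under the exponential adjunction is exactly your map $H(\varphi,k)(x)=\varphi(F(x,k))$ built from a $\times$-homotopy $F:X\times I_n\to Y$ (equivalence of (1) and (2)). Your explicit verification that $H$ is a graph homomorphism is the content the paper leaves implicit, so there is nothing to correct.
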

\begin{proof}
We can define the composition map
$$Z^Y \times Y^X \rightarrow Z^X, \; (g,f) \mapsto g \circ f.$$
Because of the equivalence between (1) and (3) of Proposition \ref{prop TFAE}, a similar proof to Lemma \ref{lem 3.2} works.
\end{proof}

We conclude this section with odd involutions of bigraphs \cite{Matsushita 3}.

An odd involution of a bigraph $X$ is a graph homomorphism $\alpha : X \rightarrow X$ such that $\varepsilon_X \circ \alpha(v) \neq \varepsilon_X(v)$ for every $v \in V(X)$. Clearly, an involution is regarded as a $\Z_2$-action, and we write $X/\alpha$ to indicate the quotient graph by the $\Z_2$-action on $X$.

A typical example of the odd involutions is the involution $(0,v) \leftrightarrow (1,v)$ of the Kronecker double covering $K_2 \times G$ over a graph $G$. On the other hand, for an odd involution $\alpha$ of $X$, it is easy to see that $K_2 \times (X/\alpha) \cong X$ as bigraphs.

For later sections, we need the following construction: Let $X$ and $Y$ be bigraphs, and $\alpha_X$ and $\alpha_Y$ odd involutions of $X$ and $Y$, respectively. Then the exponential graph $Y^X$ of bigraphs has the involution
$$\alpha_{Y^X} (f) = \alpha_Y \circ f \circ \alpha_X.$$

\section{Quillen type lemma}

We first recall the Quillen's theorem B for posets:

\begin{thm}[Quillen \cite{Quillen}] \label{thm Quillen B}
Let $p:P \rightarrow Q$ be an order-preserving map. Suppose that for every pair $y,y'$ of elements of $Q$ with $y \leq y'$, the map $p^{-1}(Q_{\leq y}) \hookrightarrow p^{-1}(Q_{\leq y'})$ is a homotopy equivalence. Then the diagram
$$\begin{CD}
|p^{-1}(Q_{\leq y})| @>>> |P|\\
@VVV @VV{p_*}V\\
|Q_{\leq y}| @>>> |Q|
\end{CD}$$
is a homotopy pullback square for every element $y$ of $Q$. In particular, $|p^{-1}(Q_{\leq y})|$ is a homotopy fiber of $p_* : |P| \rightarrow |Q|$ over $y \in Q$.
\end{thm}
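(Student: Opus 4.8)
The plan is to rewrite $|P|$, relative to $|Q|$, as a homotopy colimit indexed by $Q$, and then to recognise the square as the canonical ``fibration square'' attached to a homotopy-locally-constant diagram. Let $F$ be the functor from $Q$ to spaces sending $y$ to $|p^{-1}(Q_{\leq y})|$, covariant in $y$ via the inclusions $p^{-1}(Q_{\leq y}) \hookrightarrow p^{-1}(Q_{\leq y'})$ for $y \leq y'$; by hypothesis $F$ carries every morphism to a homotopy equivalence. The first step is a standard homotopy-colimit formula: there is a natural homotopy equivalence $\mathrm{hocolim}_{y \in Q} F(y) \xrightarrow{\;\simeq\;} |P|$ over $|Q|$, under which the canonical map $F(y) \to \mathrm{hocolim}_{y \in Q} F(y)$ corresponds to the inclusion $|p^{-1}(Q_{\leq y})| \hookrightarrow |P|$. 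I would obtain this by passing through the poset $E = \{ (x,y) \in P \times Q \; | \; p(x) \leq y\}$ with the product order and its projections $\pi : E \to P$ and $q : E \to Q$: the section $s(x) = (x, p(x))$ satisfies $\pi s = \mathrm{id}$ and $s\pi \leq \mathrm{id}_E$, so $\pi$ is a homotopy equivalence over $Q$ which restricts to homotopy equivalences $q^{-1}(Q_{\leq y}) \simeq p^{-1}(Q_{\leq y})$ compatibly with all the maps in sight, while $q^{-1}(Q_{\leq y})$ deformation retracts onto $q^{-1}(y) \cong p^{-1}(Q_{\leq y})$ via $(x, y') \mapsto (x, y)$. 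Since $E$ is exactly the Grothendieck construction of $y \mapsto p^{-1}(Q_{\leq y})$ and $q$ is the canonical projection, Thomason's theorem identifies $|E|$ with $\mathrm{hocolim}_{y \in Q} F(y)$ over $|Q|$, which completes the step.

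The second step is the assertion that for any functor $F : Q \to \mathbf{Top}$ carrying every morphism to a homotopy equivalence, the square
$$\begin{CD}
F(y) @>>> \mathrm{hocolim}_{y \in Q} F(y)\\
@VVV @VVV\\
|Q_{\leq y}| @>>> |Q|
\end{CD}$$
is homotopy cartesian for every $y \in Q$. Since $|Q_{\leq y}|$ is contractible (it has greatest element $y$), this says precisely that $F(y)$ is the homotopy fibre of $\mathrm{hocolim}_{y \in Q} F(y) \to |Q|$ over $y$. Granting this, the theorem follows by transporting the square back along the equivalence of the first step, with the inclusion $|p^{-1}(Q_{\leq y})| \hookrightarrow |P|$ playing the role of $F(y) \to \mathrm{hocolim}$; the concluding ``in particular'' is then just the contractibility of $|Q_{\leq y}|$.

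The second step is where I expect the real work to lie: it is exactly the statement that the homotopy colimit of a homotopy-locally-constant diagram over a poset is a quasifibration over the nerve, with the expected fibres, and this is the technical core of Quillen's Theorem B. I would prove it by a skeletal induction on $|Q|$, realising $\mathrm{hocolim}_{y \in Q} F(y)$ as an increasing union of pushouts indexed by the chains of $Q$, so that over each simplex of $|Q|$ the projection restricts, up to homotopy, to a product projection $F(y) \times \Delta^n \to \Delta^n$ --- this is where the hypothesis that every structure map of $F$ is an equivalence is used --- and applying Quillen's gluing lemma for quasifibrations, in the form familiar from the proof of the Dold--Thom theorem, at each stage; the passage to the (possibly transfinite) colimit uses the standard stability of quasifibrations under such colimits. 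Alternatively one may simply cite Quillen \cite{Quillen}, of which the statement above is the special case of posets.
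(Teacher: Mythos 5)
The paper does not actually prove this statement: it is quoted verbatim from Quillen's paper with a citation, so there is no internal proof to compare against. Your sketch is essentially correct, and it is in fact close in spirit to Quillen's own argument: the poset $E = \{(x,y) \mid p(x) \le y\}$ you introduce is (the poset incarnation of) the comma category appearing in Quillen's proof, and your ``second step'' is precisely the content of his bisimplicial lemma, which he too establishes by a Dold--Thom type gluing of quasifibrations. Two points deserve to be made explicit before the sketch is a proof. First, $\pi : E \to P$ is not a map over $Q$ on the nose (one has $p\pi \le q$ but not $p\pi = q$), and likewise the left vertical map in your step-2 square is the constant map to the vertex $y$, whereas the left vertical map in the theorem is the restriction of $p$; both discrepancies are repaired by natural transformations, hence by explicit homotopies, and are harmless because $|Q_{\le y}|$ is contractible, but the homotopy-commuting prism identifying the two squares should be written down rather than waved at with ``compatibly with all the maps in sight.'' Second, the skeletal induction and quasifibration gluing in your second step require some cofibrancy of the diagram $F$, so that the homotopy colimit really is an increasing union of pushouts along cofibrations to which the Dold--Thom criteria apply; this is automatic in the case at hand because each structure map of $F$ is the inclusion of the order complex of a full subposet, hence the inclusion of a subcomplex of a CW complex. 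With these caveats addressed, your argument is complete and is the standard modern route to Theorem B for posets.
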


We need the following slight generalization of Theorem \ref{thm Quillen B}.

\begin{cor}\label{cor Quillen B}
Let $X$, $Y$, and $Z$ be posets and let $p:Y \rightarrow X$ and $f:Z \rightarrow X$ be order preserving maps. Suppose that the following conditions hold.
\begin{itemize}
\item[(1)] Let $x$ and $x'$ be elements of $X$. If $x \leq x'$, then the inclusion $p^{-1}(X_{\leq x}) \hookrightarrow p^{-1}(X_{\leq x'})$ is a homotopy equivalence.
\item[(2)] For each element $z$ of $Z$, $f$ induces an isomorphism $Z_{\leq z} \rightarrow X_{\leq f(z)}$.
\end{itemize}
Then the diagram
\begin{eqnarray}\label{CD 4.2}
\begin{CD}
|Z \times_{X} Y| @>{|g|}>> |Y|\\
@V{|q|}VV @VV{|p|}V\\
|Z| @>{|f|}>> |X|
\end{CD}
\end{eqnarray}
is homotopy pullback. Here $g: Z \times_X Y \rightarrow Y$ and $q: Z \times_X Y \rightarrow Z$ are the projections.
\end{cor}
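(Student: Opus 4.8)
The plan is to deduce Corollary \ref{cor Quillen B} from Quillen's Theorem B (Theorem \ref{thm Quillen B}) applied to the projection $q : Z \times_X Y \to Z$. So the first task is to verify that $q$ satisfies the hypothesis of Theorem \ref{thm Quillen B}: for $z \le z'$ in $Z$, the inclusion $q^{-1}(Z_{\le z}) \hookrightarrow q^{-1}(Z_{\le z'})$ must be a homotopy equivalence. The key computation here is an identification of the fibers $q^{-1}(Z_{\le z})$. By definition of the pullback poset, $q^{-1}(Z_{\le z}) = \{(z'', y) : z'' \le z, \; f(z'') = p(y)\}$; I would first check that the second projection identifies this with $p^{-1}(f(Z_{\le z}))$, and then use hypothesis (2) — that $f$ restricts to an isomorphism $Z_{\le z} \xrightarrow{\sim} X_{\le f(z)}$ — to rewrite $f(Z_{\le z}) = X_{\le f(z)}$, hence $q^{-1}(Z_{\le z}) \cong p^{-1}(X_{\le f(z)})$. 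The isomorphism should moreover be compatible with the inclusions coming from $z \le z'$, so that the inclusion $q^{-1}(Z_{\le z}) \hookrightarrow q^{-1}(Z_{\le z'})$ is carried to $p^{-1}(X_{\le f(z)}) \hookrightarrow p^{-1}(X_{\le f(z')})$, which is a homotopy equivalence by hypothesis (1) since $f(z) \le f(z')$.

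Granting that, Theorem \ref{thm Quillen B} applied to $q$ tells us that for each $z \in Z$ the square
$$\begin{CD}
|q^{-1}(Z_{\le z})| @>>> |Z \times_X Y|\\
@VVV @VV{|q|}V\\
|Z_{\le z}| @>>> |Z|
\end{CD}$$
is homotopy pullback. I would then compare this with the analogous consequence of Theorem \ref{thm Quillen B} for $p : Y \to X$ (whose hypothesis is exactly (1)): for each $x \in X$ the square with corners $|p^{-1}(X_{\le x})|$, $|Y|$, $|X_{\le x}|$, $|X|$ is homotopy pullback. Taking $x = f(z)$ and using the identification $q^{-1}(Z_{\le z}) \cong p^{-1}(X_{\le f(z)})$ together with the isomorphism $Z_{\le z} \cong X_{\le f(z)}$ from (2), one sees that the two homotopy pullback squares glue along the map $|g|: |Z \times_X Y| \to |Y|$: the composite square obtained from the $q$-square by post-composing $|Z \times_X Y| \to |Y|$ and $|Z| \to |X|$ is, up to the identifications, the $p$-square. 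A standard pasting/cancellation lemma for homotopy pullback squares (if the outer rectangle and the right-hand square are homotopy pullbacks then so is the left-hand square) then yields that (\ref{CD 4.2}) is homotopy pullback. Concretely: the $q$-square over $z$ is a homotopy pullback; the right square of (\ref{CD 4.2}) restricted over $Z_{\le z}$, i.e. the square $|Z_{\le z}| \to |Z| \to |X|$ against $|Y| \to |X|$, has homotopy fiber $|p^{-1}(X_{\le f(z)})|$ by the $p$-square; these agree, so the fiber of $|g|$ over the relevant component matches the fiber of $|p|$, for every $z$, and since every point of $|X|$ lying in the image is $\le$-dominated by some $|f(z)|$ the conclusion follows.

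The step I expect to be the main obstacle is making the fiber identification $q^{-1}(Z_{\le z}) \cong p^{-1}(X_{\le f(z)})$ fully rigorous and, more importantly, ensuring it is natural in $z$ so that it intertwines the two families of homotopy pullback squares — in other words, assembling the pointwise statements into the single square (\ref{CD 4.2}) via a pasting argument requires a little care about which pasting lemma for homotopy pullbacks is being invoked and over what base. A clean way around a delicate pasting argument is to invoke Theorem \ref{thm Quillen B} directly for $q$ to get that $|q^{-1}(Z_{\le z})| \to |Z \times_X Y| \to |Z|$ is a homotopy fiber sequence over $z$, then observe (using (2)) that $|Z| \to |X|$ restricted to $|Z_{\le z}|$ is a homotopy equivalence onto $|X_{\le f(z)}|$, so that the homotopy fiber of $|q|$ over $z$ equals the homotopy fiber of $|f| \circ |q| = |p| \circ |g|$ over $f(z)$; comparing with the homotopy fiber of $|p|$ over $f(z)$ — which is $|p^{-1}(X_{\le f(z)})| \cong |q^{-1}(Z_{\le z})|$ by the fiber identification and Theorem \ref{thm Quillen B} for $p$ — shows $|g|$ induces a homotopy equivalence on vertical homotopy fibers of (\ref{CD 4.2}) over every point of $|Z|$, which is precisely the assertion that (\ref{CD 4.2}) is homotopy pullback.
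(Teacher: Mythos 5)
Your proposal is correct and follows essentially the same route as the paper: both verify that $q$ satisfies the hypothesis of Quillen's Theorem~B via the identification $q^{-1}(Z_{\leq z}) \cong p^{-1}(X_{\leq f(z)})$ furnished by condition (2), and then conclude by showing that the square induces equivalences on vertical homotopy fibers. The only difference is cosmetic: the paper makes the final ``equivalence on homotopy fibers implies homotopy pullback'' step explicit by replacing $|p|$ and $|q|$ with fibrations, whereas you invoke that criterion directly.
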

\begin{proof}
Set $W = Z \times_{X} Y$. Let $z$ be an element of $Z$. We claim that the diagram (\ref{CD 4.2}) induces a homotopy equivalence from the homotopy fiber of $|q|$ over $z$ to the homotopy fiber of $|p|$ over $f(z)$. Note that
\begin{eqnarray*}
q^{-1}(Z_{\leq z}) & = & \{ (z',y) \; | \; \textrm{$z' \leq z$ and $f(z') = p(y)$.}\}\\
& \cong & \{ y \in Y \; | \; p(y) \leq f(z)\}\\
& = & p^{-1}(X_{\leq f(z)}).
\end{eqnarray*}
by the condition (2). Therefore for a pair $z'$ and $z$ of elements of $Z$, $z' \leq z$ implies that $q^{-1}(Z_{\leq z'}) \hookrightarrow q^{-1}(Z_{\leq z})$ is a homotopy equivalence. Thus $q: W \rightarrow Z$ satisfies the hypothesis of Quillen's theorem B, and the diagram (\ref{CD 4.2}) induces a homotopy equivalence from the homotopy fiber of $|q|$ to the homotopy fiber of $|p|$.

Consider a commutative diagram
\begin{eqnarray}
\begin{CD}
|W| @>j>> W' @>{q'}>> |Z|\\
@V{|g|}VV @VVV @VV{f}V\\
|Y| @>i>> Y' @>{p'}>> |X|
\end{CD}
\end{eqnarray}
such that $q' j = |q|$, $p' i = |p|$, $p'$ and $q'$ are fibrations, and $i$ and $j$ are weak equivalences. Since $p'$ is a fibration, we have that $|Z| \times_{|X|} Y'$ is a homotopy pullback of $|p|$ and $|f|$. Thus it suffices to show that $W' \rightarrow |Z| \times_{|X|} Y'$ is a weak equivalence.

Let $z$ be an element of $Z$. Consider the commutative diagram
$$\begin{CD}
F' @>>> W' @>{q'}>> |Z|\\
@VVV @VVV @|\\
F @>>> |Z| \times_{|X|} Y' @>>> |Z|,
\end{CD}$$
where $F'$ and $F$ are fibers over $z$. Note that $F$ is a homotopy fiber of $|p| : |Y| \rightarrow |X|$, and that $|Z| \times_{|X|} p' : |Z| \times_{|X|} Y' \rightarrow |Z|$ is a fibration since it is a pullback of a fibration. We have already shown that the map $F' \rightarrow F$ is a homotopy equivalence. Thus $W' \rightarrow |Z| \times_{|X|} Y'$ is a weak equivalence.
\end{proof}

\begin{rem} \label{rem 5.3}
In Corollary \ref{cor Quillen B}, suppose that $X$, $Y$, and $Z$ are $\Z_2$-spaces and $p$, $f$ are $\Z_2$-equivariant. If we assume the following additional assumption $(1)'$, then we have that $|W| = |Z \times_X Y|$ is a homotopy pullback of $|f|$ and $|p|$ in the category of $\Z_2$-spaces:
\begin{itemize}
\item[(1)$'$] The map $p^{\Z_2} : Y^{\Z_2} \rightarrow X^{\Z_2}$ satisfies the hypothesis of Quillen's theorem B.
\end{itemize}
Here $X^{\Z_2}$ denotes the induced subposet of $X$ consisting of fixed points. The proof of this fact is obtained by modifying of that of Corollary \ref{cor Quillen B} in a straightforward way, so we omit the details.
\end{rem}

\section{Loop space construction}

In this section, we shall construct the loop space construction of bigraphs. We should note that the following construction is a straightforward generalization of Dochtermann \cite{Dochtermann 2}.

Let $a$ and $b$ be a pair of integers with $a \leq b$. Define the bigraph $L_{a,b}$ by
$$V(L_{a,b}) = \{ x \in \Z \; | \; a \leq x \leq b\},$$
$$E(L_{a,b}) = \{ (x,y) \; | \; |x-y| \leq 1\}$$
with the 2-coloring $L_{a,b} \rightarrow K_2$, $x \mapsto (x \; {\rm mod.} 2)$. Consider the sequence
\begin{eqnarray*}
K_2 = L_{0,1} \hookrightarrow L_{-1,2} \hookrightarrow \cdots \hookrightarrow L_{-n, n+1} \hookrightarrow \cdots .
\end{eqnarray*}
The colimit of this sequence is denoted by $L_{-\infty,\infty}$. Namely, $V(L_{-\infty, \infty}) = \Z$ and $E(L_{-\infty, \infty}) = \{ (x,y) \; | \; |x-y| = 1\}$. Let $r_n : L_{-n-1,n+2} \rightarrow L_{-n,n+1}$ be the retraction. Note that the inclusions in the above sequence are $\times$-homotopy equivalences (Lemma \ref{lem 3.2.1}). Thus the retraction $r_n$ is a $\times$-homotopy equivalence. Consider the sequence
\begin{eqnarray*}
\begin{CD}
X^{K_2} = X^{L_{0,1}} @>{r_0^*}>> X^{L_{-1,2}} @>{r_1^*}>> \cdots @>>> X^{L_{-n,n+1}} @>{r_n^*}>> \cdots
\end{CD}
\end{eqnarray*}
and define the graph $X^L$ to be the colimit of the above sequence.

\begin{lem}
The inclusion $C(X^{K_2}) \hookrightarrow C(X^L)$ is a homotopy equivalence.
\end{lem}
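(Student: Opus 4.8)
The plan is to realize $C(X^L)$ as a sequential colimit of the complexes $C(X^{L_{-n,n+1}})$ along maps that are homotopy equivalences, and then to invoke the standard fact that the first term of such a colimit is included as a homotopy equivalence. Since $C(-)=\Hom(\mathbf{1},-)$ and $\mathbf{1}$ is a finite graph, $C$ preserves sequential colimits (Remark~\ref{rem 2.1}); hence $C(X^L)$ is the colimit of
$$C(X^{K_2})=C(X^{L_{0,1}})\xrightarrow{C(r_0^*)}C(X^{L_{-1,2}})\xrightarrow{C(r_1^*)}\cdots,$$
and the map in the statement is the canonical map from the first term. Each $r_n$ is surjective (it is a retraction), so $r_n^*\colon X^{L_{-n,n+1}}\to X^{L_{-n-1,n+2}}$ is injective on vertices and $C(r_n^*)$ realizes $C(X^{L_{-n,n+1}})$ as a subcomplex of $C(X^{L_{-n-1,n+2}})$; hence on geometric realizations $|C(X^L)|$ is the increasing union $\bigcup_n|C(X^{L_{-n,n+1}})|$ of CW subcomplexes.

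Next I would check that each inclusion $C(X^{L_{-n,n+1}})\hookrightarrow C(X^{L_{-n-1,n+2}})$ is a homotopy equivalence. As recorded before the statement, $r_n$ is a $\times$-homotopy equivalence of bigraphs, so there is a bigraph homomorphism $s_n\colon L_{-n,n+1}\to L_{-n-1,n+2}$ with $s_nr_n\simeq_{\times}\mathrm{id}$ and $r_ns_n\simeq_{\times}\mathrm{id}$. Applying the exponential functor $X^{(-)}$ to these $\times$-homotopies and using Lemma~\ref{lem 3.4} gives $r_n^*\circ s_n^*=(s_nr_n)^*\simeq_{\times}\mathrm{id}$ and $s_n^*\circ r_n^*=(r_ns_n)^*\simeq_{\times}\mathrm{id}$, so $r_n^*$ is a $\times$-homotopy equivalence of graphs with homotopy inverse $s_n^*$. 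Now apply $\Hom(\mathbf{1},-)$: by Proposition~\ref{prop 2.2} with $T=\mathbf{1}$, $\times$-homotopic graph homomorphisms induce homotopic maps on $C$, whence $C(s_n^*)\circ C(r_n^*)\simeq\mathrm{id}$ and $C(r_n^*)\circ C(s_n^*)\simeq\mathrm{id}$, i.e.\ $C(r_n^*)$ is a homotopy equivalence.

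To finish, $|C(X^L)|$ is an increasing union of CW subcomplexes in which each inclusion $|C(X^{L_{-n,n+1}})|\hookrightarrow|C(X^{L_{-n-1,n+2}})|$ is both a closed cofibration and a homotopy equivalence, hence a strong deformation retract. Concatenating these deformation retractions — reparametrizing the $n$-th one onto the time interval $[2^{-n},2^{-n+1}]$ — yields a deformation retraction of $|C(X^L)|$ onto $|C(X^{K_2})|$, which is continuous because $|C(X^L)|$ carries the colimit topology and only finitely many stages act on each $|C(X^{L_{-n,n+1}})|$. Therefore $C(X^{K_2})\hookrightarrow C(X^L)$ is a homotopy equivalence.

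I expect this last step to be the only real obstacle: the passage from ``every stage of a sequential colimit is a homotopy equivalence'' to ``the first term is included as a homotopy equivalence'' is a standard fact for CW complexes, but it genuinely uses cofibrancy, which is why it is worth having observed that the maps $C(r_n^*)$ are honest subcomplex inclusions. Everything before it is formal once one has Lemma~\ref{lem 3.4}, Proposition~\ref{prop 2.2}, and the colimit-preservation statement of Remark~\ref{rem 2.1}.
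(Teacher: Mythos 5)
Your proof is correct and follows the same route as the paper: the paper's own argument is exactly that the sequence $C(X^{L_{0,1}})\hookrightarrow C(X^{L_{-1,2}})\hookrightarrow\cdots$ is a sequence of trivial cofibrations (because each $r_n^*$ is a $\times$-homotopy equivalence), so the inclusion of the first term into the colimit is a homotopy equivalence. You have merely expanded the details the paper leaves implicit — that $C$ preserves the sequential colimit, that the maps are genuine subcomplex inclusions, and the telescope argument for the colimit of trivial cofibrations — all of which check out.
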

\begin{proof}
Since $r_n^* : X^{L_{-n,n+1}} \rightarrow X^{L_{-n-1, n+2}}$ is a $\times$-homotopy equivalence, the sequence
$$C(X^{K_2}) = C(X^{L_{0,1}}) \hookrightarrow C(X^{L_{-1,2}}) \hookrightarrow \cdots$$
is a sequence of trivial cofibrations. Thus the colimit
$$C(X^{K_2}) \rightarrow {\rm colim}_{n \rightarrow \infty} C(X^{L_{-n, n+1}}) \cong C(X^L)$$
is a homotopy equivalence.
\end{proof}

\begin{rem}
A looped vertex of $X^L$ is a graph homomorphism from $L_{-\infty, \infty}$ to $X$ such that the following properties holds: There is an integer $n$ such that $f(k) = f(k+2)$ if $k \geq -n$ and $f(k) = f(k-2)$ if $k \leq n$. In this sense, we can regard $X^L$ as the graph of ``stable paths" of $X$.
\end{rem}

Define the homomorphisms $\iota_{-2n}, \iota_{2n} : K_2 \rightarrow L_{-2n, 2n+1}$ by $\iota_k(i) = k+i$ for $k= \pm 2n$ and $i = 0,1$. Then we have the homomorphisms
$$\iota_{\pm 2n}^* : X^{L_{-2n,2n+1}} \longrightarrow X^{K_2}, \; f \mapsto f \circ \iota_{\pm 2n}.$$
Let $e_{2n} = \iota_{2n}^*$ and $e_{-2n} = \iota_{-2n}^*$. Then these homomorphisms induce homomorphisms
$$e_{+\infty}, e_{-\infty} : X^L \longrightarrow X^{K_2}.$$
Note that $e_{+\infty}$ and $e_{-\infty}$ are retractions of $X^{K_2} \hookrightarrow X^L$.

The main structural result in this paper is the following:

\begin{prop} \label{prop 4.2}
The order-preserving map
$$(e_{- \infty}, e_{+\infty})_* : C(X^L) \longrightarrow C(X^{K_2} \times X^{K_2})$$
satisfies the hypothesis of Quillen's theorem B (see Theorem \ref{thm Quillen B}).
\end{prop}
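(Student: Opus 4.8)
The plan is to verify the hypothesis of Quillen's theorem B directly: for a pair of elements $(\sigma_-,\sigma_+) \leq (\tau_-,\tau_+)$ in $C(X^{K_2} \times X^{K_2}) \cong C(X^{K_2}) \times C(X^{K_2})$, I must show that the inclusion of fibers
$$(e_{-\infty},e_{+\infty})_*^{-1}\big( C(X^{K_2}\times X^{K_2})_{\leq (\sigma_-,\sigma_+)} \big) \hookrightarrow (e_{-\infty},e_{+\infty})_*^{-1}\big( C(X^{K_2}\times X^{K_2})_{\leq (\tau_-,\tau_+)} \big)$$
is a homotopy equivalence. Unwinding the definitions, a point of the first fiber is a finite reflexive clique $\eta$ of $X^L$ such that $e_{-\infty}(\eta) \subseteq \sigma_-$ and $e_{+\infty}(\eta) \subseteq \sigma_+$ (where I identify a clique with the union of its vertices, each of which is a stable path). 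So the fiber over $(\sigma_-,\sigma_+)$ is the clique complex of the induced subgraph of $X^L$ on the stable paths $f$ with $e_{-\infty}(f) \in \sigma_-$ and $e_{+\infty}(f) \in \sigma_+$. I will denote this subgraph $X^L_{\sigma_-,\sigma_+}$, and the claim is that $X^L_{\sigma_-,\sigma_+} \hookrightarrow X^L_{\tau_-,\tau_+}$ induces a homotopy equivalence on clique complexes whenever $\sigma_- \subseteq \tau_-$ and $\sigma_+ \subseteq \tau_+$.

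The key step is to identify $X^L_{\sigma_-,\sigma_+}$ with a colimit along the truncated pieces and to produce a deformation retraction at each finite stage. Concretely, $X^L_{\sigma_-,\sigma_+}$ is the colimit over $n$ of the subgraph of $X^{L_{-2n,2n+1}}$ consisting of homomorphisms $f: L_{-2n,2n+1} \to X$ with $f \circ \iota_{-2n} \in \sigma_-$ and $f \circ \iota_{2n} \in \sigma_+$; call this $E_n(\sigma_-,\sigma_+)$. Now I would observe that the folds $r_n : L_{-n-1,n+2} \to L_{-n,n+1}$ (and their even iterates) restrict to the subgraphs: precomposition with the inclusion $L_{-2n,2n+1} \hookrightarrow L_{-2n-2,2n+3}$ and with the retraction the other way are mutually inverse $\times$-homotopies, and crucially these are \emph{the identity on the endpoint data} $\iota_{\pm 2n}$ — the retraction fixes the first $2n+2$ and last $2n+2$ coordinates and only folds in the middle. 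Hence the $\times$-homotopy of Lemma \ref{lem 3.2.1} (via the multi-homomorphism $\eta$ there) is supported away from the endpoints, so it descends to a $\times$-homotopy equivalence $E_n(\sigma_-,\sigma_+) \to E_{n+1}(\sigma_-,\sigma_+)$ — in fact the inclusion is a composite of folds in $X^{L_{-2n-2,2n+3}}$ relative to the frozen endpoint coordinates. Therefore, exactly as in the lemma preceding Proposition \ref{prop 4.2}, the sequence $C(E_0) \hookrightarrow C(E_1) \hookrightarrow \cdots$ consists of trivial cofibrations, so $C(E_0(\sigma_-,\sigma_+)) \to C(X^L_{\sigma_-,\sigma_+})$ is a homotopy equivalence; and $E_0(\sigma_-,\sigma_+) = (X^{K_2})_{\sigma_-} \cap (X^{K_2})_{\sigma_+}$ in the obvious notation, i.e. just $B_{/K_2}(X)$-style data already living in $X^{K_2}$ with $f \in \sigma_- \cap \sigma_+$... more precisely, $E_0$ is the subgraph of $X^{K_2}$ on vertices lying in $\sigma_- \cap \sigma_+$ — wait, the endpoints of $L_{0,1}$ are $\iota_0 = \iota_1 = \mathrm{id}$, so $E_0(\sigma_-,\sigma_+)$ is the subgraph of $X^{K_2}$ on vertices $f$ with $f \in \sigma_-$ and $f \in \sigma_+$, i.e.\ on $\sigma_- \cap \sigma_+$.

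Putting this together, the square
$$\begin{CD}
C(E_0(\sigma_-,\sigma_+)) @>{\simeq}>> C(X^L_{\sigma_-,\sigma_+})\\
@VVV @VVV\\
C(E_0(\tau_-,\tau_+)) @>{\simeq}>> C(X^L_{\tau_-,\tau_+})
\end{CD}$$
commutes, the horizontal maps are homotopy equivalences, and the left vertical map is the inclusion of clique complexes on $\sigma_-\cap\sigma_+ \subseteq \tau_-\cap\tau_+$ inside $X^{K_2}$. So it remains to show this left vertical map is a homotopy equivalence. Here I would use that $\sigma_\pm$ and $\tau_\pm$ are \emph{cliques} of $X^{K_2}$: the inclusion $\sigma_-\cap\sigma_+ \hookrightarrow \tau_-\cap\tau_+$ of simplices induces a homotopy equivalence on clique complexes because a clique of $X^{K_2}$ is a simplex, and the clique complex of the induced subgraph on any subset $S$ of a clique is just the full simplex on $S$, which is contractible — so both $C(E_0(\sigma_-,\sigma_+))$ and $C(E_0(\tau_-,\tau_+))$ are contractible (nonempty, since $\sigma_- \cap \sigma_+$ is nonempty as $(\sigma_-,\sigma_+)$ came from an actual clique $\eta$ of $X^L$, whose endpoint images under $e_{\pm\infty}$ agree on... hmm, not obviously nonempty — but it is nonempty precisely because $(\sigma_-,\sigma_+)$ lies in the image of $(e_{-\infty},e_{+\infty})_*$, which one checks directly: any stable path through a vertex $u$ of $X^{K_2}$ with a chosen start and a chosen end contributes).

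The main obstacle I anticipate is the bookkeeping in the second step: verifying carefully that the fold $\times$-homotopies on $X^{L_{-2n-2,2n+3}}$ can be chosen to fix the endpoint coordinates, so that they genuinely restrict to the subgraphs $E_n(\sigma_-,\sigma_+)$ and give trivial cofibrations after applying $C(-)$. Once that relative-fold statement is in place, everything else is the colimit argument already used in the preceding lemma together with the elementary fact that clique complexes of subsets of a clique are contractible.
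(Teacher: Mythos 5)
There is a genuine gap here, and in fact the overall strategy cannot work: you end up proving that the fibers of $(e_{-\infty},e_{+\infty})_*$ are contractible, but they are not. Up to the equivalences $C(X^L)\simeq C(X^{K_2})$ and $C(X^{K_2}\times X^{K_2})\simeq C(X^{K_2})\times C(X^{K_2})$, the map $(e_{-\infty},e_{+\infty})_*$ is the diagonal, whose homotopy fiber is the loop space $\Omega(B_{/K_2}(X))$; producing that loop space is the entire point of the construction, and it is not contractible in general. Concretely, for $X=C_6$ the box complex is a circle, and the fiber over a looped vertex $\{(e_1,e_2)\}$ of $X^{K_2}\times X^{K_2}$ is the graph of stable paths from the edge $e_1$ to the edge $e_2$, whose set of components is $\Z$. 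The step that fails is the ``relative fold'': the constraint defining $E_n(\sigma_-,\sigma_+)$ is imposed on the outermost values $(f(-2n),f(-2n+1))$ and $(f(2n),f(2n+1))$ of a path, and the retraction $X^{L_{-2n-2,2n+3}}\to X^{L_{-2n,2n+1}}$ (restriction of a path to the shorter interval) replaces these by the values one step inward, about which the hypothesis $(g(-2n-2),g(-2n-1))\in\sigma_-$, $(g(2n+2),g(2n+3))\in\sigma_+$ says nothing. So the fold does not carry $E_{n+1}$ into $E_n$, the telescope $C(E_0)\hookrightarrow C(E_1)\hookrightarrow\cdots$ is not a sequence of trivial cofibrations, and $E_0$, the induced subgraph on $\sigma_-\cap\sigma_+$ (which can perfectly well be empty while the fiber is nonempty, e.g.\ $\sigma=\{(e_1,e_2)\}$ with $e_1\neq e_2$), is not a deformation retract of the fiber. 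A smaller related slip: $C(X^{K_2}\times X^{K_2})$ is not isomorphic to $C(X^{K_2})\times C(X^{K_2})$ --- a clique of a product graph is a set of pairs, not a pair of cliques --- so Quillen's hypothesis must be checked over actual cliques $\sigma$ of the product, the fiber condition being $(e_{-\infty}(f),e_{+\infty}(f))\in\sigma$.

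The paper's proof is organized quite differently and never attempts to identify the fibers. For $\sigma\leq\sigma'$ it forms constrained subgraphs $A_n\subseteq A'_n\subseteq X^{L_{-2n,2n+1}}$ essentially as you do, but instead of folding it fixes a single element $((x_0,y_0),(x_1,y_1))\in\sigma$ and defines maps $u_n:A'_n\to A_{n+1}$ that extend a path by appending $x_0,y_0$ on the left and $x_1,y_1$ on the right. Lemma \ref{lem 2.4} then gives $u_nj_n\simeq_\times i_n$ and $j_{n+1}u_n\simeq_\times i'_n$, so by Proposition \ref{prop 2.2} the inclusion of colimits $C(A_\infty)\hookrightarrow C(A'_\infty)$ induces isomorphisms on all homotopy groups. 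This compares the two fibers to each other without computing either, which is all that Theorem B requires. If you want to repair your argument, this ``append a fixed endpoint and shift'' device is the missing idea.
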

\begin{proof}
(The proof given here is essentially the same as Dochtermann \cite{Dochtermann 2}) Recall that we write $V_i(X)$ to mean $\varepsilon^{-1}(i)$ $(i = 0,1)$. Let $E'(X) = E(X) \cap (V_0(X) \times V_1(X))$. Note that a looped vertex of $X^{K_2} \times X^{K_2}$ is identified with a pair of elements of $E'(X)$. Let $\sigma, \sigma' \in C(X^{K_2} \times X^{K_2})$ with $\sigma \leq \sigma'$. Then these are finite sets of looped vertices of $V(X^{K_2}) \times V(X^{K_2})$. Define the induced subgraphs $A_n$ and $A'_n$ of $X^{K_2}$ as follows:
$$V(A_n) = \{ f : L_{-2n, 2n+1} \rightarrow X \; | \; ((f(-2n), f(-2n+1)),(f(2n), f(2n+1))) \in \sigma\},$$
$$V(A'_n) = \{ f : L_{-2n, 2n+1} \rightarrow X \; | \; ((f(-2n), f(-2n+1)),(f(2n), f(2n+1))) \in \sigma'\}.$$
Then $s_n^* = (r_{2n} \circ r_{2n+1} )^* : X^{L_{-2n, 2n+1}} \hookrightarrow X^{L_{-2n-2, 2n+3}}$ induces the inclusions $i_n : A_n \hookrightarrow A_{n+1}$ and $i'_n : A'_n \hookrightarrow A'_{n+1}$. Let $A_{\infty}$ and $A'_{\infty}$ be the colimits of $A_n$ and $A'_n$, respectively. Then we have
$$C(A_\infty) = (e_{-\infty}, e_{+\infty})_*^{-1}(C(X^{K_2} \times X^{K_2})_{\leq \sigma}),\;  C(A'_\infty) = (e_{-\infty}, e_{+\infty})_*^{-1}(C(X^{K_2} \times X^{K_2})_{\leq \sigma'}).$$
Consider the commutative diagram
\begin{eqnarray}
\begin{CD}
A_0 @>{i_0}>> A_1 @>{i_1}>> \cdots @>>> A_n @>{i_n}>> \cdots \\
@V{j_0}VV @V{j_1}VV @. @VV{j_n}V @.\\
A'_0 @>{i'_0}>> A'_1 @>{i'_1}>> \cdots @>>> A'_n @>{i'_n}>> \cdots ,
\end{CD}
\end{eqnarray}
where each arrow in the diagram is an inclusion. Let $((x_0,y_0),(x_1,y_1))$ be an element of $\sigma$. Define the graph homomorphism $s_n : A'_n \rightarrow A_{n+1}$ as follows: Let $\gamma \in V(A'_n) \subset \mathcal{G}_{/K_2}(L_{-2n,2n+1})$, define $u_n(\gamma) : V(L_{-2n -2,2n+3}) \rightarrow V(X)$ by
$$u_n(\gamma) |_{L_{-2n,2n+1}} = \gamma$$
and
$$u_n(-2n-2) = x_0, u_n(-2n-1) = y_0, u_n(\gamma)(2n+2) = x_1, u_n(\gamma)(2n+3) = y_1.$$
Then one can show that $u_n j_n \simeq_{\times} i_n$, $j_{n+1} u_n \simeq_{\times} i'_n$ (Lemma \ref{lem 2.4}). Using this and Proposition \ref{prop 2.2}, one can show that the inclusion $j_\infty : C(A_\infty) \hookrightarrow C(A'_\infty)$ induces isomorphisms between their homotopy groups.
\end{proof}

\begin{rem} \label{rem 4.2.1}
Suppose that $X$ is equipped with an odd involution $\alpha_X$. For each $n$, consider the odd involution $\beta_n$ of $L_{-2n,2n+1}$ defined by $x \mapsto 1-x$. Then $X^{L_{-2n,2n+1}}$ has the natural involution $\alpha_n$, described in the end of Section 3. Then the involutions $\alpha_n$ induce an involution $\alpha_{\infty}$ of $X^L$, and the map $(e_{-\infty}, e_{+\infty}) : X^L \rightarrow X^{K_2} \times X^{K_2}$ is $\Z_2$-equivariant. Here we consider the $\Z_2$-action on $X^{K_2} \times X^{K_2}$ as the exchange of the first and second entries.

We claim that the map $(e_{-\infty}, e_{+\infty})_* : C(X^L) \rightarrow C(X^{K_2} \times X^{K_2})$ satisfies the property (1)$'$ of Remark \ref{rem 5.3}. To see this, we need to show that the restriction of $e_{+\infty}$
$$e_{+\infty *} : C(X^L)^{\Z_2} \longrightarrow C(X^{K_2}) \cong (C(X^{K_2} \times X^{K_2}))^{\Z_2}$$
satisfies the hypothesis of Quillen's theorem B (Theorem \ref{thm Quillen B}).

Let $\sigma, \sigma' \in C(X^{K_2} \times X^{K_2})^{\Z_2}$ with $\sigma \leq \sigma'$. We define $A_n$ and $A'_n$ as the proof of Proposition \ref{prop 4.2}. Set $B_n = A_n^{\Z_2}$ and $B'_n = (A'_n)^{\Z_2}$, i.e. the induced subgraphs of $A_n$ and $A'_n$ consisting of fixed vertices. Let $B_\infty$ and $B'_\infty$ the colimits of $B_n$ and $B'_n$, respectively. Then
$$C(B_{\infty}) = e_{+\infty *}^{-1}(C(X^{K_2})_{\leq \sigma}), \; C(B'_{+\infty}) = e_{\infty *}^{-1}(C(X^{K_2})_{\leq \sigma'}).$$
After that, almost the same proof follows and we omit the details.
\end{rem}

\begin{thm} \label{thm 4.3}
Let $X$ and $Y$ be bigraphs and $f$, $g : Y \rightarrow X$ bigraph homomorphisms. Suppose that either $f$ or $g$ is an inclusion. Define the graph $Z$ by the pullback diagram
$$\begin{CD}
Z @>>> X^L\\
@VVV @VVV\\
Y^{K_2} @>{(f^{K_2}, g^{K_2})}>> X^{K_2} \times X^{K_2}.
\end{CD}$$
Then $C(Z)$ is a homotopy pullback of $(f_*,g_*) : B_{/K_2}(Y) \rightarrow B_{/K_2}(X) \times B_{/K_2}(X)$ and the diagonal $\Delta : B_{/K_2}(X) \rightarrow B_{/K_2}(X) \times B_{/K_2}(X)$.
\end{thm}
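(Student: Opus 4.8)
The plan is to reduce Theorem \ref{thm 4.3} to Corollary \ref{cor Quillen B} by choosing the right three posets and two order-preserving maps. Set $P = C(X^L)$, $Q = C(X^{K_2} \times X^{K_2})$, and $R = B_{/K_2}(Y) \cong C(Y^{K_2})$ (using Corollary \ref{cor 3.2.3}). The map $p : P \to Q$ is $(e_{-\infty}, e_{+\infty})_*$, which by Proposition \ref{prop 4.2} satisfies the hypothesis of Quillen's theorem B; this is exactly hypothesis (1) of Corollary \ref{cor Quillen B}. The map $\phi : R \to Q$ is $(f^{K_2}, g^{K_2})_* = (f_*, g_*)$. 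To apply Corollary \ref{cor Quillen B} I must verify hypothesis (2): for every $\eta \in C(Y^{K_2})$, the map $\phi$ restricts to an isomorphism $C(Y^{K_2})_{\leq \eta} \to C(X^{K_2} \times X^{K_2})_{\leq \phi(\eta)}$. Once both hypotheses are checked, Corollary \ref{cor Quillen B} gives that
$$\begin{CD}
|C(Z)| @>>> |C(X^L)|\\
@VVV @VVV\\
|C(Y^{K_2})| @>{|\phi|}>> |C(X^{K_2} \times X^{K_2})|
\end{CD}$$
is a homotopy pullback, where $Z = C(Y^{K_2}) \times_{C(X^{K_2}\times X^{K_2})} C(X^L)$ is precisely the face poset of the clique complex of the pullback graph in the statement (the pullback of graphs, passed through $C(-)$, yields the pullback of posets since $C$ sends the graph pullback to the poset pullback on looped-vertex sets). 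It then remains to identify $|C(X^L)| \simeq |B_{/K_2}(X)|$ compatibly with $p$ and the diagonal.

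The key steps, in order: (i) record that $C(Y^{K_2}) \simeq B_{/K_2}(Y)$, $C(X^{K_2}) \simeq B_{/K_2}(X)$, and (by the first Lemma of Section 5) $C(X^{K_2}) \hookrightarrow C(X^L)$ is a homotopy equivalence, with $e_{+\infty *}$ a retraction; (ii) check hypothesis (2) of Corollary \ref{cor Quillen B} for $\phi = (f_*, g_*)$ — this is where the assumption that $f$ or $g$ is an inclusion enters; (iii) cite Proposition \ref{prop 4.2} for hypothesis (1); (iv) invoke Corollary \ref{cor Quillen B}; (v) observe that under the homotopy equivalence $C(X^{K_2}) \hookrightarrow C(X^L)$ the composite $p \circ (\text{inclusion}) : C(X^{K_2}) \to C(X^{K_2} \times X^{K_2})$ is the diagonal $\Delta$ (since $e_{\pm\infty}$ restrict to the identity on $X^{K_2}$), so replacing the top-right corner $|C(X^L)|$ by the homotopy-equivalent $|B_{/K_2}(X)|$ and $|p|$ by $|\Delta|$ turns the homotopy pullback square into the one asserted in the theorem; (vi) also replace the bottom-left and bottom-right corners using the homotopy equivalences of (i), noting $(f_*, g_*)$ is compatible with these identifications.

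For step (ii), note that a looped vertex of $Y^{K_2}$ is an element of $E'(Y) = E(Y) \cap (V_0(Y) \times V_1(Y))$, so $\eta \in C(Y^{K_2})$ is a finite subset $S \subset E'(Y)$, and $\phi(\eta)$ is $\{((f(a),f(b)),(g(a),g(b))) : (a,b) \in S\}$. The downset $C(Y^{K_2})_{\leq \eta}$ is the Boolean lattice $2^S \setminus \{\emptyset\}$, and likewise $C(X^{K_2}\times X^{K_2})_{\leq \phi(\eta)}$ is the Boolean lattice on $\phi(\eta)$; so $\phi$ restricts to an isomorphism of downsets precisely when the map $S \to \phi(\eta)$, $(a,b) \mapsto ((f(a),f(b)),(g(a),g(b)))$, is injective. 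When $f$ (say) is an inclusion this is clear since $(a,b)$ is recovered as $(f(a),f(b))$; when $g$ is the inclusion, use the second coordinate instead. I expect step (ii) to be the main obstacle: without the inclusion hypothesis the map on downsets need not be injective (distinct edges of $Y$ could have the same image under both $f$ and $g$), which would break condition (2) of Corollary \ref{cor Quillen B}; so I would isolate precisely where injectivity is used and confirm that "$f$ or $g$ is an inclusion" is exactly what is needed. The remaining steps are bookkeeping: tracking the homotopy equivalences through the square and checking the relevant triangles commute up to homotopy (or strictly, since $e_{\pm\infty}$ are honest retractions and $(f^{K_2}, g^{K_2})$ is an honest poset map).
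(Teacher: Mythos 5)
Your proposal follows essentially the same route as the paper: apply Corollary \ref{cor Quillen B} with hypothesis (1) supplied by Proposition \ref{prop 4.2} and with $(f^{K_2},g^{K_2})_*$ as the second leg, then translate the resulting homotopy pullback square into the box-complex statement via the retractions $e_{\pm\infty}$ and the standard identifications $C(X^{K_2})\simeq B_{/K_2}(X)$ and $C(X^{K_2}\times X^{K_2})\simeq C(X^{K_2})\times C(X^{K_2})$. Your explicit verification of hypothesis (2) on the Boolean downsets, pinpointing that the ``$f$ or $g$ is an inclusion'' assumption is exactly what makes $S\to\phi(S)$ injective, is correct and in fact spells out a step the paper leaves implicit.
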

\begin{proof}
By Corollary \ref{cor Quillen B}, $C(Z)$ is a homotopy pullback of $(e_{-\infty}, e_{+\infty})_* : C(X^L) \rightarrow C(X^{K_2} \times X^{K_2})$ and $(f^{K_2}, g^{K_2})_* : C(Y^{K_2}) \rightarrow C(X^{K_2} \times X^{K_2})$. Thus the theorem follows from the following two commutative diagrams:{\tiny
$$\begin{CD}
C(X^L) @= C(X^L) @<{\simeq}<< C(X^{K_2}) @>{\simeq}>> B_{/K_2}(X)\\
@V{(e_{-\infty}, e_{+\infty})_*}VV @V{(e_{-\infty *}, e_{+\infty *})}VV @V{\Delta}VV @VV{\Delta}V\\
C(X^{K_2} \times X^{K_2}) @<{\simeq}<< C(X^{K_2}) \times C(X^{K_2}) @= C(X^{K_2}) \times C(X^{K_2}) @>{\simeq}>> B_{/K_2}(X) \times B_{/K_2}(X)
\end{CD}$$
}and
$$\begin{CD}
C(Y^{K_2}) @= C(Y^{K_2}) @>{\simeq}>> B_{/K_2}(Y)\\
@V{(f^{K_2}, g^{K_2})_*}VV @V{((f^{K_2})_* , (g^{K_2})_*)}VV @VV{(f_*,g_*)}V\\
C(X^{K_2} \times X^{K_2}) @>{\simeq}>> C(Y^{K_2}) \times C(Y^{K_2}) @>{\simeq}>> B_{/K_2}(X) \times B_{/K_2}(X).
\end{CD}$$
\end{proof}

\begin{eg} \label{eg 4.4}
Let $x: K_2 \rightarrow X$ be a bigraph homomorphism. Then $x$ is regarded as a point of $B_{/K_2}(X)$ and we consider $x$ as the basepoint of $B_{/K_2}(X)$. Note that $K_2^{K_2}$ is isomorphic to the graph ${\bf 1}$, the graph consisting of one looped vertex. Define the loop space construction $\Omega_{/K_2}(X,x)$ by the pullback diagram
$$\begin{CD}
\Omega_{/K_2}(X,x) @>>> X^L\\
@VVV @VVV\\
{\bf 1} @>{(x,x)}>> X^{K_2} \times X^{K_2}.
\end{CD}$$
Then Theorem \ref{thm 4.3} implies that $C(\Omega_{/K_2}(X,x)) \simeq \Omega(B_{/K_2}(X),x)$. Note that a vertex of $\Omega_{/K_2}(X,x)$ is a bigraph homomorphism $\gamma : L_{-\infty, + \infty} \rightarrow X$ such that $\gamma(2k) = x(0)$ and $\gamma(2k+1) = x(1)$ if $|k|$ is sufficiently large. Two vertices $\gamma$ and $\gamma'$ are adjacent if $(\gamma \times \gamma')(E(L_{-\infty, +\infty})) \subset E(X)$.

We recall the definition of 2-fundamental groups \cite{Matsushita 1}. Let $L_n = L_{0,n}$ and consider $L_n$ as a graph (not a bigraph). Let $(G,v)$ be a based graph. Here we do not assume that $v$ is a looped vertex. A graph homomorphism $\gamma : L_n \rightarrow G$ with $\gamma (0) = \gamma(n)$ is called a {\it loop with length $n$}. The length of a loop $\gamma$ is denoted by $l(\gamma)$. The set of loops of $(G,v)$ is denoted by $L(G,v)$. Consider the following conditions concerning a pair of loops $\gamma$ and $\gamma'$:

\begin{itemize}
\item[(1)] $l(\gamma') = l(\gamma) + 2$ and there is $x \in \{ 0,1,\cdots, l(\gamma)\}$ such that $\gamma (i) = \gamma'(i)$ for $i \leq x$ and $\gamma'(i + 2) = \gamma(i)$ for $i \geq x$. In particular, $\gamma'(x) = \gamma(x) = \gamma'(x+2)$.

\item[(2)] $\gamma$ and $\gamma'$ have the same length $n$, and $(\gamma \times \gamma')(E(L_n)) \subset E(G)$.
\end{itemize}
We write $\simeq$ the equivalence relation generated by (1) and (2). Let $\pi_1^2(G,v)$ be the set $L(G,v)/\simeq$ of equivalence classes of $\simeq$, and we call it the 2-fundamental group of $(G,v)$. The group structure of $\pi_1^2(G,v)$ is given by the concatenation of loops. 

By the definition of $\simeq$, we have the group homomorphism
$$\pi_1^2(G,v) \longrightarrow \Z_2, \; [\varphi] \longmapsto l(\varphi) \; {\rm modulo} \; 2.$$
The even part $\pi_1^2(G,v)_{ev}$ is the kernel of the above homomorphism. In other words, an element of $\pi_1^2(G,v)$ is an equivalence class $\alpha$ of $\simeq$ such that the parity of the length of a representative of $\alpha$ is even.

Let $(X,x)$ be a based bigraph. We want to show that $\pi_1^2(X,x(0)) \cong \pi_0(\Omega_{/K_2}(X), x)$. For a loop $\gamma : L{2n} \rightarrow X$ of $(X,x(0))$, define $\Phi(\gamma) \in \Omega_{/K_2}(X,x)$ as follows:
$$\Phi(\gamma) = \begin{cases}
\gamma(k) & (0 \leq k)\\
x(j) & ({\rm otherwise}, j = 0,1, k = j \; {\rm mod. 2})
\end{cases}$$
We want to show that if $\gamma \simeq_2 \gamma'$, then $\Phi(\gamma)$ and $\Phi(\gamma')$ belong to the same component of $\Omega_{/K_2}(X,x)$. If $\gamma$ and $\gamma'$ satisfy the condition (2) above, then $\Phi(\gamma)$ and $\Phi(\gamma')$ are adjacent in $\Omega_{/K_2}(X,x)$. Suppose that $\gamma$ and $\gamma'$ satisfy the condition (1). Let $\tilde{\gamma} : L_{2n+2} \rightarrow X$ be the extension of $\gamma$ which maps $2n + 2 - i$ to $x(i)$ for $i = 0, 1$. Then $\Phi(\gamma) = \Phi(\tilde{\gamma})$. Next let $\tilde{\gamma}'$ be the loop of $(X,x(0))$ defined by $\tilde{\gamma}' (i) = \gamma' (i)$ if $i \neq x+1$ and $\tilde{\gamma}' (x+1) = \gamma (x+1) = \gamma'(x+3)$. Then this $\gamma'$ and $\tilde{\gamma}'$ satisfy the condition (2) since they only differ at one point. Thus $\Phi(\tilde{\gamma}')$ and $\Phi(\gamma')$ are adjacent. It is easy to see that $\Phi(\tilde{\gamma}')$ and $\Phi(\gamma)$ belong to the same component of $\Omega_{/K_2}(X,x)$ by iterating the modification illustrated in Figure 1. Thus we have a correspondence from $\pi_1^2(X,x(0))_{ev}$ to $\pi_0(\Omega_{/K_2}(X,x))$. It is clear that $\Phi$ is bijective.

\begin{center}
\begin{picture}(270,60)(0,10)
\put(40,20){\line(1,2){20}} \put(60,60){\line(2,-1){40}} \put(0,40){\line(2,-1){36}} \put(36,22){\line(1,2){20}} \qbezier(56,62)(40,20)(40,20)

\put(33,33){\vector(1,2){8}} \put(33,33){\line(-2,1){16}}

\put(120,40){\vector(1,0){30}}

\put(170,40){\line(2,-1){40}} \put(210,20){\line(1,2){18}} \put(228,56){\line(2,-1){40}} \put(230,60){\line(2,-1){40}} \qbezier(268,36)(268,36)(230,60)
\end{picture}

{\bf Figure 1.}
\end{center}

Suppose that $v$ is not an isolated vertex of a graph $G$ and let $w$ be a vertex adjacent to $v$. Recall that we want to show
$$\pi_1(N(G),v) \cong \pi_1^2(G,v)_{ev}$$
(see Section 2 for the definition of $N(G)$). Define $x : K_2 \rightarrow G$ by $x(0) = v$ and $x(1) = w$, and let $\tilde{x} = K_2 \times x$. Thus we have
\begin{eqnarray*}
\pi_1^2(G,v)_{ev} & \cong & \pi_1^2(K_2 \times G, (0,v)) \cong  \pi_0(\Omega_{/K_2} (K_2 \times G, \tilde{x})) \cong \pi_0(\Omega (B_{/K_2}(K_2 \times G), \tilde{x})) \\
& \cong & \pi_0(\Omega(B(G),x)) \cong \pi_1(B(G),x) \cong \pi_1(N(G),v).
\end{eqnarray*}
The verification of $\pi_1^2(G,v)_{ev} \cong \pi_1^2(K_2 \times G, (0,v))$ is straightforward, or found in \cite{Matsushita 1}. Thus Theorem \ref{thm Matsushita} follows.
\end{eg}

Let $X$ be a $\Z_2$-space. Recall that the free loop space of $X$ is denoted by $\L X$ and the space of $\Z_2$-maps from $S^1$ to $X$ is denoted by $\L' X$..

\begin{eg} \label{eg 4.5}
Let $X$ be a bigraph. Define the graph $\L X$ by the following pullback diagram:
$$\begin{CD}
\L X @>>> X^L\\
@VVV @VVV\\
X^{K_2} @>{\Delta_X^{K_2}}>> X^{K_2} \times X^{K_2}.
\end{CD}$$
Theorem \ref{thm 4.3} implies that
$$C(\L X) \simeq \L (B_{/K_2}(X)).$$
Define $r'_{n} :C_{n+2} \rightarrow C_n$ by $r'_n(i) = i$ for $i \leq n$ and $r'_n(n+1) = n-1$. We consider the colimit of $X^{C_{2n}}$ by $(r'_n)^*$. Then we have
$$C(\L X) \cong {\rm colim}_{n \rightarrow +\infty} C(X^{C_{2n}}) \simeq {\rm colim}_{n \rightarrow +\infty} \Hom_{/K_2}(C_{2n}, X).$$
For the last homotopy equivalence, see Lemma \ref{lem 3.2.2} and Proposition 15.10.12 of \cite{Hirschhorn} for example. Thus if $G$ is a graph, we have that
\begin{eqnarray*}
{\rm colim}_{n \rightarrow +\infty} \Hom(C_{2n}, G) &=& {\rm colim}_{n \rightarrow \infty} \Hom_{/K_2}(C_{2n} , K_2 \times G)\\
& \simeq & \L (B_{/K_2}(K_2 \times G)) = \L (B(G)).
\end{eqnarray*}
If we regard $X^{K_2}$ as a $\Z_2$-graph by the trivial $\Z_2$-action, then the diagonal $X^{K_2} \rightarrow X^{K_2} \times X^{K_2}$ is $\Z_2$-equivariant and hence we have
$${\rm colim}_{n \rightarrow +\infty} \Hom(C_{2n}, G) \simeq_{\Z_2} \L(B(G)).$$
\end{eg}

\begin{eg} \label{eg 4.6}
Let $\alpha_{K_2}$ be the involution of $K_2$ which flips the edge. Let $X$ be a bigraph with an odd involution $\alpha$. Then we have an involution $\alpha'$ of $X^{K_2}$ defined by $\alpha'(f) = \alpha_{K_2} \circ f \circ \alpha$. Regard $C(X^{K_2})$ as a $\Z_2$-space by this involution. On the other hand, we have an involution $\alpha''$ of $B_{/K_2}(X)$ defined by $\alpha'' (\sigma , \tau) = (\alpha (\tau), \alpha(\sigma))$. Then it is straightforward to see $B_{/K_2}(X) \simeq_{\Z_2} C(X^{K_2})$.

Define the graph $\L' X$ by the pullback diagram
$$\begin{CD}
\L' X @>>> X^L\\
@VVV @VVV\\
X^{K_2} @>{({\rm id}_X, \alpha')}>> X^{K_2} \times X^{K_2}.
\end{CD}$$
By the same way of the proof of Theorem \ref{thm 4.3}, we have that $C(\L' X)$ is the homotopy pullback of $({\rm id}, \alpha'') : B_{/K_2}(X) \rightarrow B_{/K_2}(X) \times B_{/K_2}(X)$ and the diagonal map $B_{/K_2}(X) \rightarrow B_{/K_2}(X) \times B_{/K_2}(X)$. It is easy to see that this is homotopy equivalent to $\L' B_{/K_2}(X)$. On the other hand, we have
$$C(\L' X) \simeq {\rm colim}_{n \rightarrow +\infty} C((X / \alpha )^{C_{2n+1}}) \simeq {\rm colim}_{n \rightarrow + \infty} \Hom(C_{2n+1}, X/\alpha).$$
Thus we have
$${\rm colim}_{n \rightarrow +\infty}\Hom(C_{2n + 1}, G) = \L' (B_{/K_2}(K_2 \times G)) \simeq \L'(B(G)).$$

If we regard $X^{K_2}$ as a $\Z_2$-graph by $\alpha^{K_2}$, then the map $({\rm id}_{X^{K_2}}, \alpha')$ is $\Z_2$-equivariant and hence we have
$${\rm colim}_{n \rightarrow +\infty} \Hom(C_{2n+1}, G) \simeq_{\Z_2} \L'(B(G)).$$
\end{eg}

Combining Example \ref{eg 4.5} and Example \ref{eg 4.6}, we have the proof of Theorem \ref{thm Schultz}.

\vspace{2mm} \noindent {\bf Acknowledgement.} The author thanks Shouta Tounai. He carefully read the draft of the paper and gave me helpful comments. The author is supported by the Grand-in-Aid for Scientific Research (KAKENHI 28-6304).

\end{document}